\newtheorem{theorem}{Theorem}
\newtheorem{assumption}[theorem]{Assumption}
\newtheorem{lemma}[theorem]{Lemma}
\newcommand{\be}{\begin{equation}}
\newcommand{\ee}{\end{equation}}
\newcommand{\bee}{\begin{equation*}}
\newcommand{\eee}{\end{equation*}}
\newcommand{\beaa}{\begin{eqnarray*}}
\newcommand{\eeaa}{\end{eqnarray*}}
\newcommand{\Rn}{\mathbb{R}}
\newcommand{\diag}{\operatorname{diag}}
\newcommand{\W}{\Theta}
\newcommand{\G}{\mathcal{G}}
\newcommand{\FOM}{\mathbf{G}}
\title{NG+ : A Multi-Step Matrix-Product Natural Gradient Method for Deep Learning}
\author{%
Minghan Yang \\ Beijing International Center for Mathematical Research\\ Peking Univerisity \\ \texttt{yangminghan@pku.edu.cn}
\And
Dong Xu\\  Beijing International Center for Mathematical Research\\ Peking Univerisity \\ \texttt{taroxd@pku.edu.cn}
\And
Qiwen Cui\\  School of Mathematical Sciences \\ Peking Univerisity \\ \texttt{qwcui1107@gmail.com}
\And
Zaiwen Wen\\  Beijing International Center for Mathematical Research\\ Peking Univerisity \\ \texttt{wenzw@pku.edu.cn}
\And
Pengxiang Xu\\  Peng Cheng Laboratory \\ \texttt{xupx@pcl.ac.cn}
}
\begin{document}
\maketitle

\begin{abstract}
In this paper, a novel second-order method called NG+ is proposed. By following the rule ``the shape of the gradient equals the shape of the parameter", we define a generalized fisher information matrix (GFIM) using the products of gradients in the matrix form rather than the traditional vectorization. Then, our generalized natural gradient direction is simply the inverse of the GFIM multiplies the gradient in the matrix form. Moreover, the GFIM and its inverse  keeps the same for multiple steps so that the computational cost can be controlled and is comparable with the first-order methods. A global convergence is established under some mild conditions and a regret bound is also given for the online learning setting. Numerical results on image classification with ResNet50, quantum chemistry modeling with Schnet,  neural machine translation with Transformer and recommendation system with DLRM illustrate that GN+ is competitive with the state-of-the-art methods.
\end{abstract}

\section{Introduction}

Optimization methods play an important role in deep learning problems. 
The first-order stochastic optimization methods, e.g., SGD \cite{RobMon51}, AdaGrad
\cite{duchi2011adaptive} and Adam \cite{kingma2014adam}, have been broadly used in
practical applications and well-optimized in deep learning frameworks such as
PyTorch \cite{paszke2019pytorch} and TensorFlow \cite{abadi2016tensorflow}. It is a challenge to push forward  their performance in terms of iterations and computational time. The generalization gap in large-batch training \cite{KeskarMNST17,shallue2019measuring} has been partly addressed in LARS \cite{you2017LARS}.

Recently, quite a few different stochastic second-order methods have been
developed for large-scale problems. The efficiency of  the subsampled Newton method
\cite{roosta2019sub},  Newton sketch method \cite{pilanci2017newton}, stochastic
quasi-Newton method \cite{byrd2016stochastic}, structured stochastic
quasi-Newton method  \cite{yang2020S2QN} 
 and Kronecker-factored quasi-Newton method \cite{goldfarb2020practical,ren2021kronecker} remains to be verified in large-scale deep learning tasks.
 On the other hand, KFAC \cite{martens2015optimizing}, SENG
 \cite{yang2020sketchy} and Shampoo \cite{anil2020second} have been successful
 in deep learning models whose scale is at least the same as ResNet50 on
 ImageNet-1k. Under the independency assumptions, KFAC approximates the Fisher
 information matrix (FIM) by a Kronecker product of two smaller matrices. The SENG
 method utilizes the low-rank property of the empirical Fisher information matrix
 and the structures of the gradients to construct a search direction in a small subspace
 by using sketching techniques. Shampoo uses the 1/4-th inverse of two (or the
 1/2 inverse of one) online “structured” matrices to precondition the flattened gradients.
 It has been shown that second-order methods attain the
same validation accuracy as first-order methods with fewer epochs. For example, Shampoo
\cite{anil2020second} uses 44 epochs and SENG \cite{yang2020sketchy} takes only
41 epochs to achieve the same validation accuracy with a favorable computational
time.

In this paper, we develop a matrix-product natural gradient method NG+ for deep
learning problems. We view the parameters as a set of matrices and define a generalized Fisher information matrix (GFIM) in terms of the products of gradients in matrix
form. Consequently, a corresponding natural gradient direction is formulated.
Since the size of the GFIM is much smaller than that of the FIM in the vector
space, the inversion of GFIM is affordable and the main numerical algebraic
operations are greatly simplified compared with Shampoo. Although NG+ seems to
be a trivial modification of Shampoo, their concepts are significantly different
since Shampoo is a variant of full-matrix AdaGrad but NG+ is treated as an extension of the natural gradient method.  By using techniques such as lazy update of GFIM, block-diagonal
approximation and sketchy techniques, the overall cost of NG+ can even be comparable with
the first-order methods.  Global convergence analysis is established under
some mild conditions and a regret bound is given for a variant of NG+.
Numerical experiments on important tasks such as image classification, quantum chemistry modeling, neural machine translation and recommend system illustrate the advantages of our NG+ over the state-of-the-art methods.

\section{NG+ : A Generalized Natural Gradient Method}
For a given dataset $\{x_i,y_i\}_{i=1}^N$, consider the empirical risk minimization problem. The parameters of the neural networks are usually a set of matrices or even higher order tensors. Assume that the parameters $\Theta$ are matrices for simplicity and generality. Then, the problem is:
\be
\label{finite-sum}
\min_{\W \in \Rn^{m\times n}}\Psi(\W) = \frac{1}{N}  \sum_{i=1}^N  \psi(x_i,y_i,\W)  = \frac{1}{N}  \sum_{i=1}^N\psi_i(\W) ,\ee
where $\psi(x, y, \W)$ is the loss function. In deep learning problems, $\psi(\cdot)$ corresponds to the structures of neural networks. For example, if the output of data point $(x, y)$ through the network is $f(x;\W)$, then $\psi(x,y,\W) = \ell\left(f(x; \W),y\right)$ for some loss function $\ell(\cdot,\cdot)$, e.g., the mean squared loss and the cross-entropy loss. Denote the gradient for a single data sample by $\G_i = \nabla \psi_i(\W) \in \Rn^{m\times n} $ and the gradient of $k$-th iteration by $\G_{i,k} = \nabla \psi_i(\W_k)$. 

When treating the parameter $\Theta$ as a vector, the empirical Fisher Information Matrix (EFIM) is: 
\bee \text{EFIM}:= \frac{1}{N}\sum_{i=1}^N \text{vec}(\G_i)\text{vec}(\G_i)^\top,\eee where the vectorization of a matrix $A=(a_{i,j}) \in \Rn^{m\times n}$ is $\text{vec}(A) = [a_{1,1},\dots,a_{m,1},a_{1,2}, \dots,a_{m,2},\\ \dots,a_{1,n},\dots,a_{m,n}]^\top$. 
Since the gradient $\G_i$ itself is a matrix, by following the rule  ``the shape of the gradient equals the shape of the parameter", it is intuitive to define a generalized Fisher Information matrix (GFIM) as the average of the products between gradients in matrix form directly as follows: 
\begin{align}  
F = \frac{1}{N}\sum_{i=1}\G_i\G_i^\top, & \qquad \widetilde{F} =  \frac{1}{N}\sum_{i=1}\G_i^\top \G_i. \label{GFIM-2}
\end{align}

For a positive definite matrix $\mathbf{B}$, we have the steepest descent direction:
\be \label{generalized-2nd} - \frac{\mathbf{B}^{-1}\nabla \Psi(\Theta)}{\|\nabla \Psi\|_{{\mathbf{B}}^{-1}}} = \lim_{\epsilon \rightarrow 0}\frac{1}{\epsilon} \arg\min_{\|D\|_{\mathbf{B}}\leq \epsilon}\Psi(\Theta + D),\ee

where $\|A\|_{\mathbf{B}} = \sqrt{\text{tr}(A^\top \mathbf{B}A)}$.  Assume that $F\succ 0$, by letting $\mathbf{B} = F$ in \eqref{generalized-2nd}, we can obtain a generalized natural gradient direction as \bee D = - F^{-1}\nabla \Psi(\Theta).\eee
Similarly, by letting $\mathbf{B}=\widetilde{F}$ and adjusting the corresponding norm to $\|A\|_{\mathbf{B}} = \sqrt{\text{tr}(A \mathbf{B}A^\top)}$, the direction is changed to be $D = -\nabla \Psi(\Theta) \widetilde{F}^{-1}$. 
\subsection{Algorithmic Framework}
Now let us describe our multi-step framework. We compute one of the two subsampled curvature matrices every frequency $\mathcal{T}$ iterations on a sample set $S_k$ according to the size of the weights $\Theta$. Specifically, if $k$ \text{mod} $\mathcal{T}=0$, we calculate 
\begin{alignat} {2}
L_k & =  \frac{1}{|S_k|}\sum_{i\in S_k} \G_{i,k}\G_{i,k}^\top, \quad &\text{if } \ m  \leq n, \label{left-curvature-matrix} \\
R_k & = \frac{1}{|S_k|}\sum_{i\in S_k} \G_{i,k}^\top \G_{i,k}, \quad &\text{if } \  m> n. \label{right-curvature-matrix}
\end{alignat}
Otherwise, we simply let either $L_k=L_{k-1}$ or $R_k = R_{k-1}$. Then, the direction is revised to be: 

  \be \label{direc-sec2}D_k=\left \{ \begin{array}{lr} -(\lambda_k I + L_k)^{-1} \FOM_k,&\text{if } \  m \leq n ,\vspace{1ex}\\  - \FOM_k (\lambda_k I + R_k)^{-1},&\text{if } \  m > n  ,\end{array} \right.\ee
  where $\lambda_k$ is a damping value to make $L_k$ or $R_k$ be positive definite and $\mathbf{G}_k $ is chosen to be the mini-batch gradient $\G_{B_k} = \frac{1}{|B_k|} \sum_{i\in B_k} \G_{i,k}$ given by the sample set $B_k$. 
%
Finally, we update the parameter as\be \label{update-param}\W_{k+1} = \W_k+\alpha_k D_k .\ee
In summary, our proposed algorithm is shown in Algorithm \ref{alg:MatWM}.

 \LinesNumberedHidden
\begin{algorithm2e}[H]
\caption{NG+ : A Generalized Natural Gradient Method}
\label{alg:MatWM}
{\textbf{Inputs:}} Initial parameter $\W_0$, learning rates $\{\alpha_k\}_k$, regularization $\{\lambda_k\}_k,$ frequency $\mathcal{T}.$\\
\For{$k = 0,1,...,$}{
{Choose the sample set $B_k$ and compute $\FOM_k$;} \\
 \lIf{ $k$ mod $\mathcal{T} =0$}{ (\# compute the matrix)\\
   \quad {Construct $L_k$ or $R_k$ by \eqref{left-curvature-matrix} or \eqref{right-curvature-matrix} using the sample set ${S}_k$}}
 \lElse{ \\
   \quad Set $L_k = L_{k-1}$ or $R_k = R_{k-1}$}
{Compute the direction $D_k$ by \eqref{direc-sec2};}\\
{Update the parameter to $\Theta_{k+1}$ by \eqref{update-param}.}
}
\end{algorithm2e}


\subsection{Interpretations of the EFIM and GFIM}
The gradient with respect to (w.r.t.) a single data point in a layer of a neural network often has the following special structures:
\be
\label{Grad-Struc}
\G_i =G_i(\Theta) A_i(\Theta)^\top,
\ee
where $G_i(\Theta)\in\Rn^{m\times \kappa}$ and $A_i(\Theta) \in \Rn^{n\times \kappa}.$
Note that we denote $G_i(\Theta_k)$ and $A_i(\Theta_k)$ by $G_{i,k}$ and $A_{i,k}$ if no confusion can arise.

Consider the fully-connected layer with $\kappa$ =1. By rewriting $G_i$
and $A_i$ as $g_i$ and $a_i$ for simplicity, we have $\G_i = g_ia_i^\top$ and use the following approximation:  
\be
\begin{aligned}
\text{vec}{(\G_i)} \text{vec}{(\G_i)}^\top &= (a_ia_i^\top ) \otimes (g_i g_i^\top)  \approx \left ( \|a_i\|_2^2  I \right) \otimes \left (g_i  g_i^\top\right)  =  I_{n\times n} \otimes \left (\G_i \G_i^\top\right).
\end{aligned}
\ee
For the whole dataset, we obtain: 
\be
\label{EFIM-GFIM-approx}
\begin{aligned}
\lambda_k I + \text{EFIM}
					   \approx I_{n\times n } \otimes \left ( \lambda_k I + L_k \right )= I_{n\times n } \otimes \left (  \lambda_k I  + \frac{1}{N}\sum_{i=1}^N  \left (\G_i \G_i^\top\right)  \right ) ,
\end{aligned}
\ee
which illustrates that $L_k$ is an approximation to the EFIM in a certain sense. Similar results hold for $R_k$.
By using \eqref{EFIM-GFIM-approx}, we have 
\[
\left ( \lambda_k I + \text{EFIM}\right)^{-1}\text{vec}(\mathbf{G}_k) \approx \text{vec}\left ( (\lambda_k I + L_k)^{-1}\mathbf{G}_k\right),
\]
which means the direction in \eqref{direc-sec2} is a good approximation to the natural gradient direction in some sense.
When each column of $G_k$ are independent and identically distributed (i.i.d), the EFIM and $I\otimes \text{GFIM}$ are equivalent in expectation. We generate 2000 Gaussian random matrices with the size $200\times 200$ to compute EFIM and GFIM, respectively. The difference between the same diagonal block of the EFIM and $I\otimes \text{GFIM}$  is shown in Figure \ref{EFIM-GFIM-random}.

\begin{figure}
\centering
\begin{tabular}{ccc}
\includegraphics[width=4.5cm,height=3.375cm]{./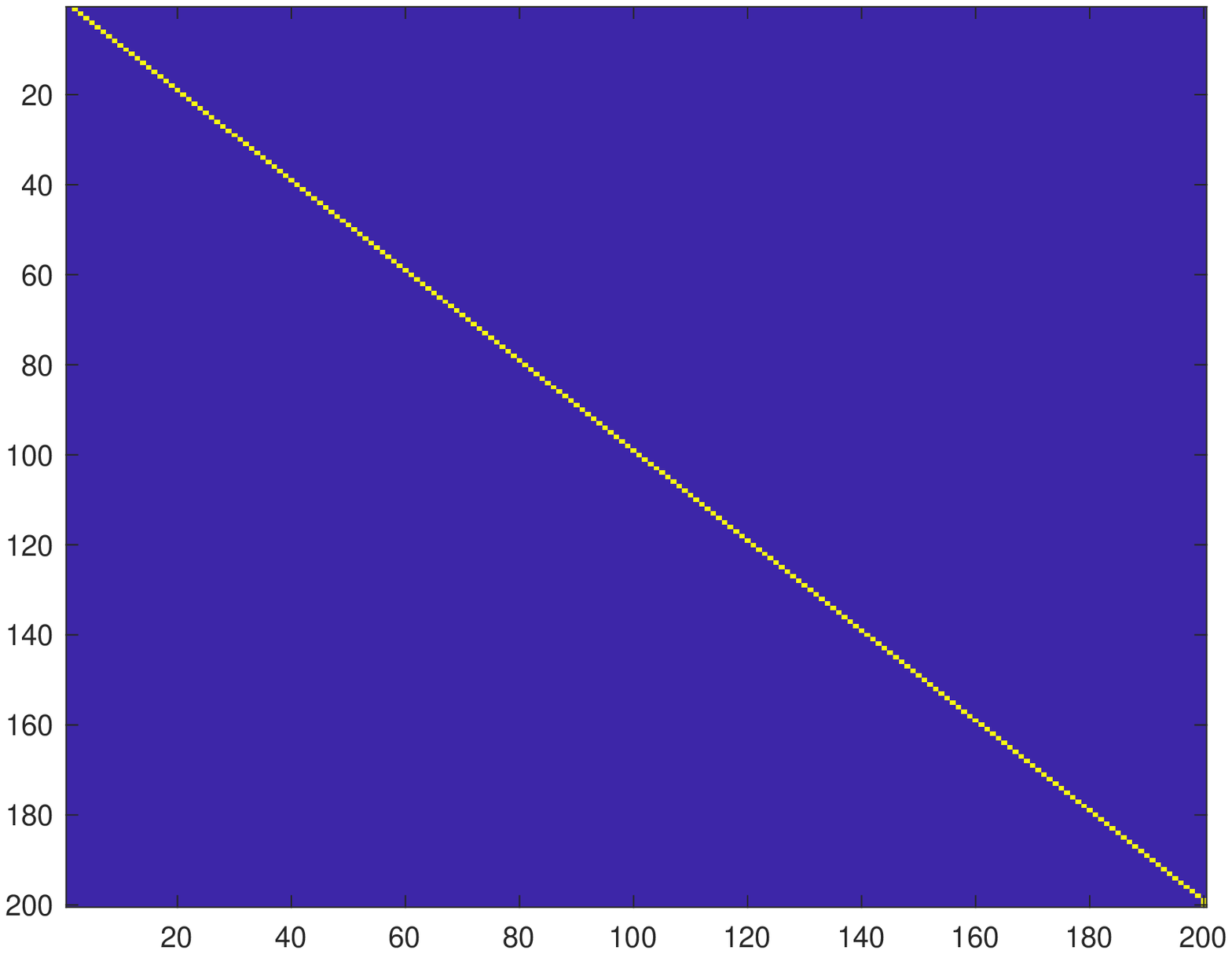}
\includegraphics[width=4.5cm,height=3.375cm]{./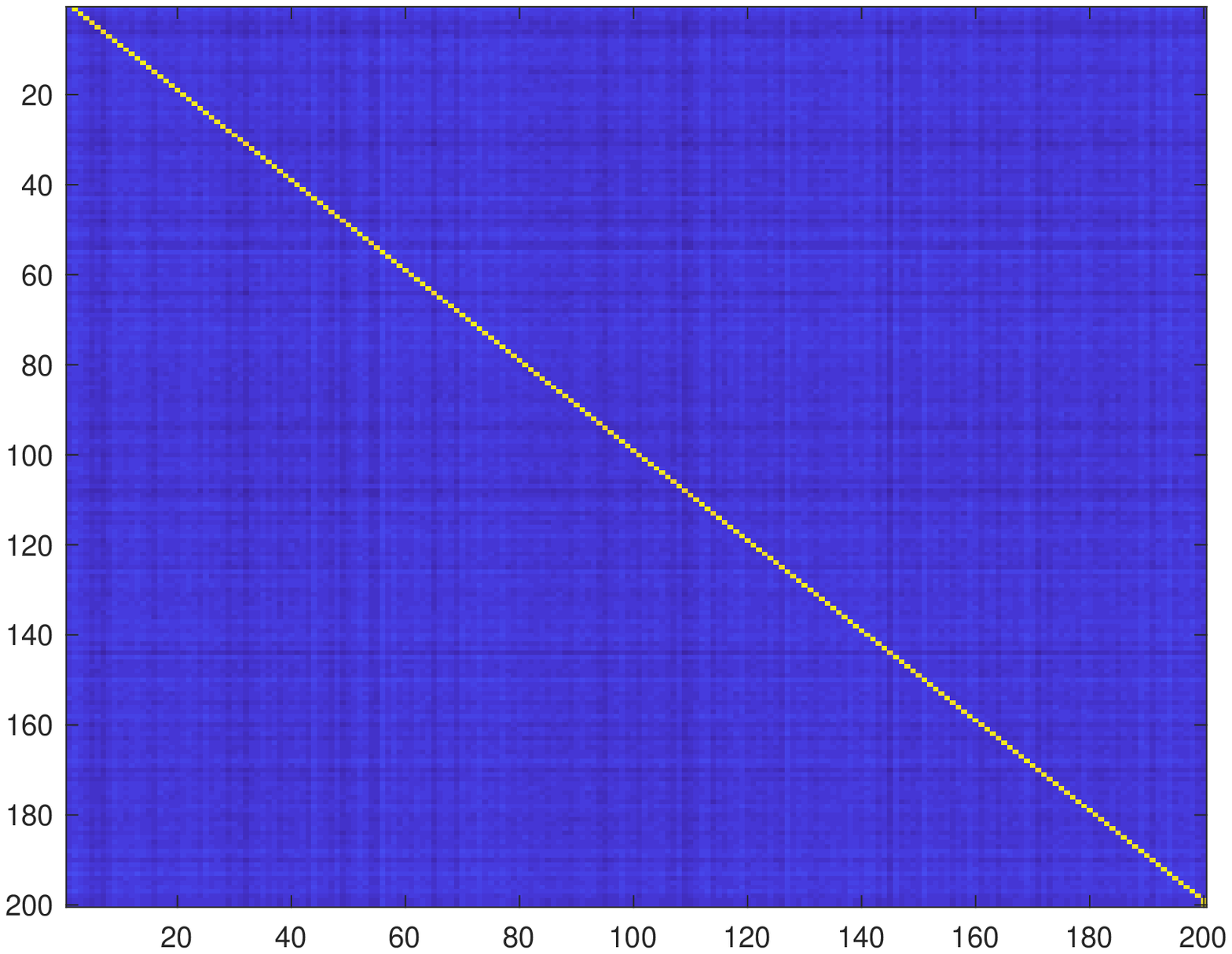}
\includegraphics[width=4.5cm,height=3.375cm]{./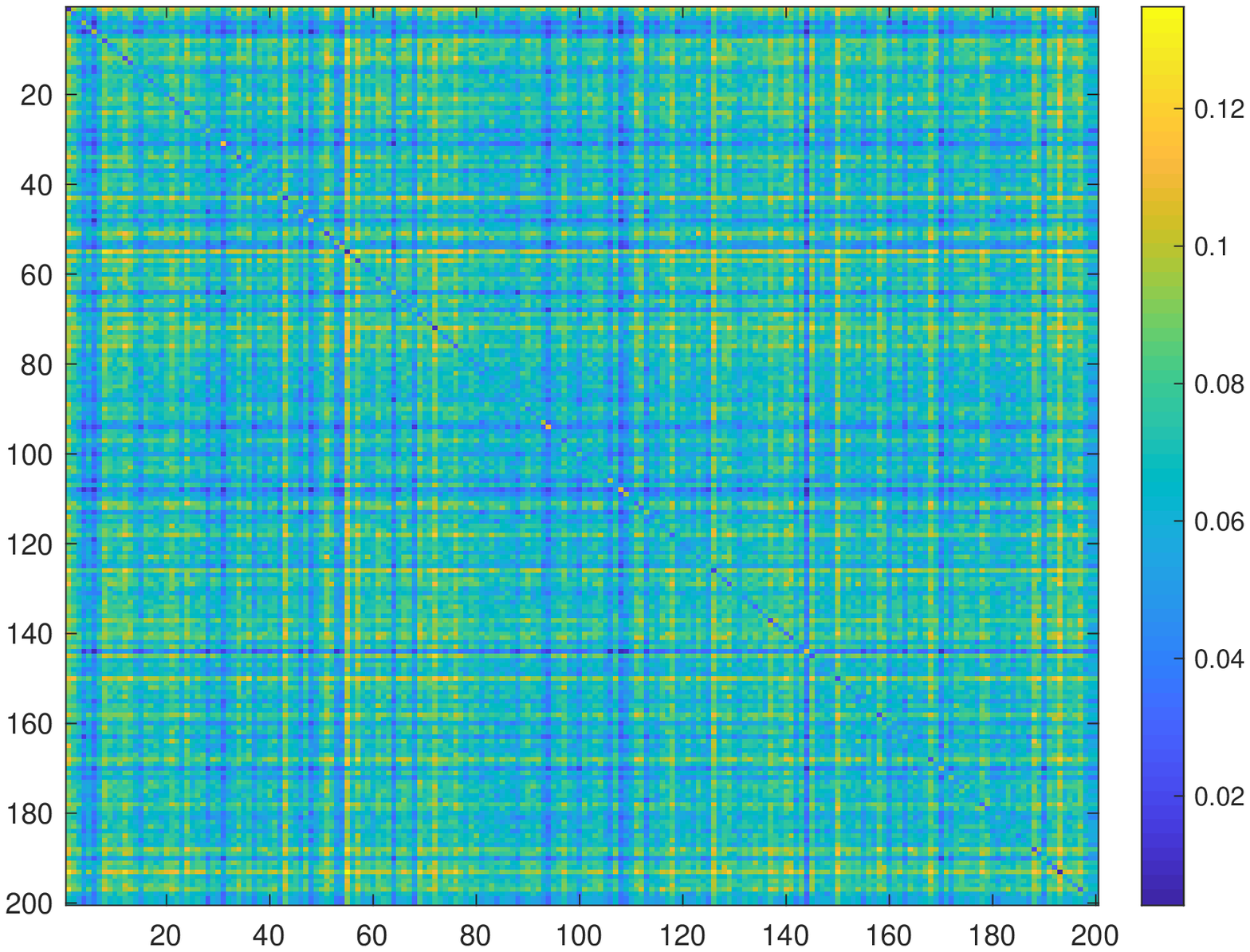}
\end{tabular}
\caption{Comparison between GFIM and EFIM. Left: the GFIM,  Middle: the 1st diagonal block of EFIM, Right: the difference between them.}
\label{EFIM-GFIM-random}
\end{figure}
 \subsection{Constructions of the Curvature Matrices} 
The curvature matrices can be constructed in other ways. We accumulate the statistics by momentum or use the mini-batch gradients. For simplicity, we do not write the multi-step strategy explicitly here. 

\textbf{Accumulate Statistics of Matrix by Momentum}

Alternatively, we can replace the subsampled matrices \eqref{left-curvature-matrix} and \eqref{right-curvature-matrix} by:
  \be
  \label{accu-curvature-mat}
\begin{aligned} 
\overline{L}_k & =\beta \overline{L}_{k-1} + (1-\beta ) L_k, \\
\overline{R}_k & = \beta \overline{R}_{k-1} +(1-\beta ) R_k, 
\end{aligned}
\ee
  where $\overline{L}_0 =L_0$, $\overline{R}_0 = R_0$ and $\beta$ is the momentum parameter. 
  
\textbf{Construct Matrix by Mini-Batch Gradients}

At each iteration, the mini-batch gradient $\G_{B_k}$ is used to update the curvature matrices $\widetilde{L}_k$ and $\widetilde{R}_k$:
\be
\label{mini-curvature-mat}
\begin{aligned} 
\widetilde{L}_k & =\beta \widetilde{L}_{k-1} + \gamma \G_{B_k} \G_{B_k}^\top,\\
\widetilde{R}_k & = \beta \widetilde{R}_{k-1} + \gamma \G_{B_k}^\top \G_{B_k}, 
\end{aligned}
\ee
  where $\widetilde{L}_0 =\lambda_0 I $, $\widetilde{R}_0 =\lambda_0 I$, $\beta$
  and $\gamma$ are the parameters for momentum. The mechanism reuses the mini-batch gradients and can be regarded as an online update of GFIM.

\subsection{Comparisons with Related Works}
The most related approaches are KFAC and Shampoo. To make the difference
clearer, we take a fully-connected layer ($m < n$) as an example. Given a sample
set $S_k$, the directions of three methods are listed as below:
\be D_k =\left \{ \begin{array}{ll} -(\lambda_k I + L_k)^{-1} \FOM_k , &\text{NG+} ,\vspace{1ex}\\ 
	 -(\widehat L_k)^{-1/4} \FOM_k (\widehat R_k)^{-1/4}, &\text{Shampoo} ,\vspace{1ex}\\ 
	 - (\widetilde A_k)^{-1}\FOM_k (\widetilde G_k)^{-1},  &\text{KFAC},\vspace{1ex}\\  
	 \end{array} \right. \ee
where $\widetilde G_k = \sqrt{\lambda_k} I + \frac{1}{|S_k|}\sum_{i\in S_k} g_i g_i^\top$ and $\widetilde A_k =  \sqrt{\lambda_k} I + \frac{1}{|S_k|}\sum_{i\in S_k} a_i a_i^\top$. $\widehat L_k$ and $\widehat R_k$ in Shampoo are exactly $\widetilde{L}_k$ and $\widetilde{R}_k$ in \eqref{mini-curvature-mat} by choosing $\beta=\gamma=1$.

Assume that the matrix update frequency of all three methods are the same. NG+
needs to compute the inverse of $L_k$ and a matrix-matrix multiplication. The
computational cost of KFAC is more than twice that of our method. Shampoo needs
to update two matrices and obtain the -1/4 inverse of both two matrices whose
computational cost is much more than that of computing the inverse. Although a
coupled Newton method is used in \cite{anil2020second}, the implementation is more complicated and the end-users need to tune more hyper parameters. 

In the distributed setting, KFAC has to synchronize two matrices with the size
$m^2$ and $n^2$ while NG+ usually needs to synchronize $L_k$ whose size  $\min
\{m^2, n^2\}$ is smaller. When the matrix of NG+ is chosen by
\eqref{mini-curvature-mat}, we do not need synchronize $L_k$ explicitly. KFAC
and Shampoo store two different matrices and the storage is $m^2 + n^2$.
However, NG+ only stores one matrix and the storage is $\min \{m^2, n^2\}$. This
means the extra memory overhead of NG+ is smaller than the memory overhead of
storing a tensor with the same size as gradient. It leads to an advantage in certain cases, for example, when one of the dimensions of embedding matrix may be 10 million or even more.

\subsection{Computational Complexity}
We next show the detailed extra computational operations compared with SGD.  
At each iteration,  the construction of $L_k$, the inversion $L_k^{-1}$, and a
matrix multiplication of $L_k^{-1}\mathbf{G}_k$ need to be computed. The
computational cost of the first two operations can be controlled by the
multi-step strategy. The last matrix multiplication is embarrassingly fast in
GPUs. Note that in practice, we can simply let the sample set $S_k = B_k$ when constructing $L_k$.

The computation of $L_k$ for the cases \eqref{accu-curvature-mat} and \eqref{mini-curvature-mat} is a little bit different. For the case \eqref{accu-curvature-mat}, we need to use the gradients of each sample. Note that commonly the deep learning framework such as PyTorch yields mini-batch gradient. Luckily, the required information $\{G_i,A_i\}$ are already computed and can be stored in the process of computing the mini-batch gradient. In distributed cases with $\mathcal{M}$ devices, the sample set is split by $S_k = [S_{k,1},\dots, S_{k, \mathcal{M}}]$. We compute matrix $L_{k,j}  =  \frac{1}{|S_{k,j}|}\sum_{i\in S_{k,j}} \G_{i,k}\G_{i,k}^\top$ with sample slices $S_{k,j}$ and then average (All-Reduce) them among all devices. The synchronous cost can be also controlled  by the delayed update strategy. Since the case \eqref{mini-curvature-mat} uses the mini-batch gradient, we do not need extra distributed communication operations.


\section{Efficient Computation of the GFIM Direction}

\subsection{Utilize the Structure of the Gradients}
In this part, we show how to save computational cost by taking full use of the structures of the gradients. Take the case $m \leq n$ as an example. In neural networks, the gradient is computed by back propagation (BP) process and often has the format \eqref{Grad-Struc} where $G_i$ and $A_i$ are obtained from backward and forward processes, respectively. 
Specifically, $\kappa$ in fully-connected layers and embedding layers equals 1 while $\kappa$ in convolutional layer is larger. 
Unluckily, $m$ and $n$ can be both very huge in applications. Hence, the cost of computing the inversion might be expensive. 

 When $\kappa$ =1, $g_{i,k}$ and $a_{i,k}$ are actually vectors and $L_k$ degenerates to be $\lambda_k I$ plus a low-rank matrix: 
  
\bee\label{kappaequalsone} 
   L_k = \lambda_k I_{m\times m} + \frac{1}{|B_k|} 
   \sum_{i\in B_k} c_{i,k} g_{i,k}g_{i,k}^\top, 
\eee
   where $c_{i,k} = a_{i,k}^\top a_{i,k}$ is a scalar. By using the Sherman-Morrison-Woodbury (SMW) formula, we obtain:
 \be
    \label{smw-fc}
   L_k^{-1} = \frac{1}{\lambda_k} I -  {U_k}\left(\lambda_k I + {U_k}^\top U_k\right )^{-1} U_k^\top,
\ee
where ${U_k} = \frac{1}{\sqrt{|B_k^{'}|}}\left [\sqrt{c_{1,k}}g_{1,k},\dots,\sqrt{c_{|B_k^{'}|,k}} g_{|B_k^{'}|,k} \right ]\in \Rn^{m\times |B_k^{'}| } .$ It is already shown in \cite{yang2020sketchy} that computing $(\lambda_k I + L_k)^{-1} \mathbf{G}_k$ by \eqref{smw-fc} is equivalent to a regularized least squares (LS) problems as follows:
\be\label{ls-fc} \min_{D\in \Rn^{m\times n}} \|U_k D - \mathbf{G}_k\|_F^2 + \lambda \|D\|_F^2.\ee
To reduce the computational complexity, we can instead solve the following sketching LS problems: 
\be\label{sketching-ls-fc} \min_{D\in \Rn^{m\times n}} \|\Omega_kU_k D - \Omega_k\mathbf{G}_k\|_F^2 + \lambda \|D\|_F^2,\ee
where $\Omega_k \in \Rn^{q\times m}, q\ll m$ is a sketching matrix. 

However, when $\kappa \neq $1, the summation part of $L_k$ is usually not a
low-rank matrix and  it is not reasonable to use SMW to reduce the computational complexity. Instead, we consider the following two strategies. Note that these strategies also work when $\kappa = 1$.


\subsection{Matrix Multiplications by Sketching}
The computational complexity of $\G_k^\top \G_k$ or $\G_k\G_k^\top$ is still not
tractable when both $m$ and $n$ are very huge. Here, we take sketching techniques to
reduce the complexity. Similar idea is already used in the Newton Sketch method
\cite{pilanci2017newton}. Given a sketching matrix $\Omega_k\in\Rn^{n \times q}$
where $E[\Omega_k \Omega_k^\top] = I_n$ and $q\ll n$, the matrix  $\G_k\G_k^\top $
 can be approximated  by
\[
\G_k\G_k^\top \approx \left (\G_k\Omega_k \right )  \left (\G_k\Omega_k \right )^\top.
\]
Here, we consider random uniform row samplings. Specifically, each column of
$\Omega_{:,i}, \; i=1,2,\dots, q,$ is sampled from \be\label{sketch-mat}
\omega\leftarrow  \sqrt{\frac{n}{q}} e_i, 
i=1,2,\dots,n,\ee 
with probability $\frac{1}{n}$, where $e_i\in \Rn^n$ is the vector whose $i$-th
element is 1 and 0 otherwise. Note that sketching the matrix $\G_k$ by uniform row sampling does not involve extra computation and can be finished easily.
 There are also other different sketchy ways. 


\subsection{Block-Diagonal Approximation to the Curvature Matrix}
We can use a diagonal matrix to approximate $L_k$ or $R_k$. Assume that $m = sp$ and denote $(\G_k)_{(i-1) p+1:ip, :}$ by $\G_k^{i,p} $, where $i=1,2,\dots,s$. The approximation is presented as follows:
\be
\label{diagonal-appro}
\G_k \G_k^\top \approx \diag \{ \G_k^{1,p}(\G_k^{1,p})^\top, \dots,\G_k^{s,p}(\G_k^{s,p})^\top \}.
\ee
Since the size of each $\G_k^{i,p}(\G_k^{i,p})^\top$ is the same, computing the inverse of $\G_k^{i,p}(\G_k^{i,p})^\top$ can be done in a batched fashion, which is significantly faster than inverting each matrix individually. By using \eqref{diagonal-appro}, the computational cost reduces from $O(m^3)$ to $O(sp^3)$.

\section{ Theoretical Analysis}
\subsection{Global Convergence}
In this part, the convergence analysis of NG+ is established. We make the following standard assumptions in stochastic optimization. Assume that $m \leq n$ and $\mathcal{T}$ is a constant just for simplicity.\begin{assumption} \label{asp: global convergence}
We assume that $\Psi(\cdot)$ satisfies the following conditions.
\begin{enumerate}
\item 
$\Psi(\Theta)$ is continuous differentiable on $\mathbb{R}^{m\times n}$ and has a lower bound, i.e., $\Psi(\Theta)\geq \Psi^*$ for any $\Theta$. The gradient $\nabla \Psi(\Theta)$ is ${L}_\Psi$-Lipschitz continuous, i.e.
$\|\nabla \Psi(\Theta_1)-\nabla \Psi(\Theta_2)\|_F\leq {L}_\Psi \|\Theta_1-\Theta_2\|_F. $
\item There exists positive constants $h_1$, $h_2$ such that $h_1 I \preceq L_k + \lambda_k I \preceq h_2 I$ holds for all $k$.
\item The mini-batch gradient is unbiased a.s. $\nabla \Psi(\Theta_k)=\mathbb{E}[G_k|\Theta_k,\cdots,\Theta_0]$ and has bounded variance $\mathbb{E}[\|G_k-\nabla \Psi(\Theta_k)\|_F^2|\Theta_k,\cdots,\Theta_0]\leq\sigma_k^2$ for all $k$.
\end{enumerate}
\end{assumption}
These assumptions are broadly used in second-order methods, such as  \cite{yang2020S2QN, goldfarb2020practical}.
We now present the global convergence guarantee of NG+.

\begin{theorem}
Suppose that Assumption \ref{asp: global convergence} is satisfied and the step size $\{\alpha_k\}$ satisfies $\alpha_k\leq\frac{2h_1^2}{L_\Psi h_2}$, $\sum\alpha_k=\infty$ and $\sum\alpha_k^2\sigma_k^2< \infty$. Then with probability 1 we have
$$\lim_{k\rightarrow\infty}\|\nabla\Psi(\Theta_k)\|_F=0.$$
\end{theorem}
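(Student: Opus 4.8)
The plan is to run the standard stochastic descent argument adapted to the matrix preconditioner and then to promote the resulting $\liminf$ statement to an almost-sure full limit. Throughout I write $P_k := (\lambda_k I + L_k)^{-1}$, so the update reads $\Theta_{k+1}-\Theta_k = -\alpha_k P_k \mathbf{G}_k$, and set $\mathbb{E}_k[\cdot] := \mathbb{E}[\cdot\mid \Theta_k,\dots,\Theta_0]$. The spectral bound in Assumption \ref{asp: global convergence} translates directly into $\tfrac{1}{h_2} I \preceq P_k \preceq \tfrac{1}{h_1} I$, which is the only structural fact about the preconditioner I will use.

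First I would invoke the descent lemma coming from the $L_\Psi$-Lipschitz continuity of $\nabla\Psi$:
\[
\Psi(\Theta_{k+1}) \le \Psi(\Theta_k) - \alpha_k \langle \nabla\Psi(\Theta_k), P_k \mathbf{G}_k\rangle + \tfrac{L_\Psi}{2}\alpha_k^2 \|P_k\mathbf{G}_k\|_F^2 ,
\]
where $\langle A,B\rangle = \operatorname{tr}(A^\top B)$. Taking $\mathbb{E}_k$, using that $P_k$ is measurable with respect to the conditioning field while $\mathbb{E}_k[\mathbf{G}_k]=\nabla\Psi(\Theta_k)$, the cross term becomes $\langle\nabla\Psi(\Theta_k), P_k\nabla\Psi(\Theta_k)\rangle \ge \tfrac{1}{h_2}\|\nabla\Psi(\Theta_k)\|_F^2$ by applying the lower spectral bound columnwise. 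For the quadratic term I bound $\|P_k\mathbf{G}_k\|_F^2 \le \tfrac{1}{h_1^2}\|\mathbf{G}_k\|_F^2$ and use the bias--variance split $\mathbb{E}_k\|\mathbf{G}_k\|_F^2 = \|\nabla\Psi(\Theta_k)\|_F^2 + \mathbb{E}_k\|\mathbf{G}_k-\nabla\Psi(\Theta_k)\|_F^2 \le \|\nabla\Psi(\Theta_k)\|_F^2 + \sigma_k^2$. Collecting terms gives
\[
\mathbb{E}_k[\Psi(\Theta_{k+1})] \le \Psi(\Theta_k) - c_k\|\nabla\Psi(\Theta_k)\|_F^2 + \tfrac{L_\Psi}{2h_1^2}\alpha_k^2\sigma_k^2, \qquad c_k := \tfrac{\alpha_k}{h_2} - \tfrac{L_\Psi}{2h_1^2}\alpha_k^2 ,
\]
and the cap $\alpha_k \le \tfrac{2h_1^2}{L_\Psi h_2}$ is exactly what forces $c_k \ge 0$ (and $c_k \ge \tfrac{\alpha_k}{2h_2}$ once $\alpha_k \le \tfrac{h_1^2}{L_\Psi h_2}$, which holds for all large $k$ in the standard regime $\sigma_k\equiv\sigma$ where $\sum\alpha_k^2<\infty$ forces $\alpha_k\to0$).

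Since $\Psi$ is bounded below by $\Psi^*$, I would apply the Robbins--Siegmund supermartingale convergence theorem to $V_k := \Psi(\Theta_k)-\Psi^*$: because the additive term $\tfrac{L_\Psi}{2h_1^2}\alpha_k^2\sigma_k^2$ is summable by hypothesis, $V_k$ converges almost surely to a finite limit and, simultaneously, $\sum_k c_k\|\nabla\Psi(\Theta_k)\|_F^2 < \infty$ a.s. Since $\sum_k c_k = \infty$ (from $\sum_k\alpha_k=\infty$ together with $c_k\gtrsim\alpha_k$), this summability forces $\liminf_{k}\|\nabla\Psi(\Theta_k)\|_F = 0$ almost surely.

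The main obstacle is upgrading this $\liminf$ to the claimed full limit. I would argue by contradiction: if $\limsup_k\|\nabla\Psi(\Theta_k)\|_F \ge 2\varepsilon > 0$ on a positive-probability event, the trajectory performs infinitely many crossings of the band $[\varepsilon, 2\varepsilon]$, and the Lipschitz estimate $\big|\,\|\nabla\Psi(\Theta_{k+1})\|_F - \|\nabla\Psi(\Theta_k)\|_F\,\big| \le L_\Psi\|\Theta_{k+1}-\Theta_k\|_F \le \tfrac{L_\Psi}{h_1}\alpha_k\|\mathbf{G}_k\|_F$ lower-bounds the length of each crossing; summing the gradient mass accumulated over these crossings contradicts $\sum_k c_k\|\nabla\Psi(\Theta_k)\|_F^2<\infty$, once I first establish the auxiliary almost-sure summability $\sum_k \alpha_k^2\|\mathbf{G}_k\|_F^2 < \infty$, which follows by taking expectations, using $\mathbb{E}_k\|\mathbf{G}_k\|_F^2 \le \|\nabla\Psi(\Theta_k)\|_F^2+\sigma_k^2$, the boundedness of $\alpha_k$, and the already-proven $\sum_k\alpha_k\|\nabla\Psi(\Theta_k)\|_F^2<\infty$. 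The one point demanding genuine care is the measurability of $P_k$ relative to $\mathbf{G}_k$: the conditional-expectation step is valid only if the curvature sample $S_k$ forming $L_k$ is drawn independently of the mini-batch $B_k$ (or $L_k$ is a lazy copy from an earlier iteration), which I would state explicitly as the operative reading of the unbiasedness condition in Assumption \ref{asp: global convergence}.
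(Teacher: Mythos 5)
Your proposal follows the paper's proof almost step for step: the same descent estimate with the spectral bounds $\tfrac{1}{h_2}I \preceq (\lambda_k I + L_k)^{-1} \preceq \tfrac{1}{h_1}I$, the same bias--variance split yielding
$\mathbb{E}_k[\Psi(\Theta_{k+1})] \leq \Psi(\Theta_k) - \bigl(\tfrac{\alpha_k}{h_2}-\tfrac{L_\Psi \alpha_k^2}{2h_1^2}\bigr)\|\nabla\Psi(\Theta_k)\|_F^2 + \tfrac{L_\Psi \alpha_k^2}{2h_1^2}\sigma_k^2$,
the same passage to $\liminf_k\|\nabla\Psi(\Theta_k)\|_F=0$ (the paper sums expectations directly; your Robbins--Siegmund invocation is an equivalent packaging, and it delivers the almost-sure summability $\sum_k \alpha_k\|\nabla\Psi(\Theta_k)\|_F^2 < \infty$ that the subsequent argument actually needs), and the same band-crossing contradiction to upgrade the $\liminf$ to a full limit. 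Two of your side remarks are genuinely sharp: the paper indeed needs $L_k$ to be measurable with respect to the conditioning field (lazy update, or $S_k$ independent of $B_k$) in order to pull $L_k^{-1}$ out of $\mathbb{E}_k$, and the stated cap $\alpha_k \leq \tfrac{2h_1^2}{L_\Psi h_2}$ only yields $c_k \geq 0$, whereas the paper's summation silently uses $c_k \geq \tfrac{\alpha_k}{2h_2}$, which requires the halved cap --- both points the paper glosses over.

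The one concrete weak point is the auxiliary ingredient you chose for the final step: the summability $\sum_k \alpha_k^2\|\mathbf{G}_k\|_F^2 < \infty$ is mis-weighted for the crossing estimate and, as sketched, the contradiction does not close. Over a crossing window $a_i \leq k < b_i$ you must bound the accumulated travel $\sum_{k=a_i}^{b_i-1}\alpha_k\|\mathbf{G}_k\|_F$, and Cauchy--Schwarz against your quantity gives
$\varepsilon \leq \tfrac{L_\Psi}{h_1}\,(b_i-a_i)^{1/2}\bigl(\sum_{k=a_i}^{b_i-1}\alpha_k^2\|\mathbf{G}_k\|_F^2\bigr)^{1/2}$,
leaving the window length $b_i-a_i$ completely uncontrolled. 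The paper's pairing avoids this by weighting the \emph{squared} steps with $\alpha_k^{-1}$: via H\"older,
$\|\Theta_{a_i}-\Theta_{b_i}\|^2 \leq \bigl(\sum_{k=a_i}^{b_i-1}\alpha_k\bigr)\bigl(\sum_{k=a_i}^{b_i-1}\alpha_k^{-1}\|\Theta_{k+1}-\Theta_k\|^2\bigr)$,
where the second factor equals $\sum_k \alpha_k\|L_k^{-1}G_k\|^2 \leq \sum_k \tfrac{\alpha_k}{h_1^2}\bigl(\|\mathbf{G}_k\|^2\bigr)$, controlled in conditional expectation by $\tfrac{\alpha_k}{h_1^2}(\|\nabla\Psi(\Theta_k)\|^2+\sigma_k^2)$, while the first factor tends to zero because $\|\nabla\Psi(\Theta_k)\| \geq \varepsilon$ inside the window forces $\sum_{k=a_i}^{b_i-1}\alpha_k \leq \varepsilon^{-2}\sum_{k=a_i}^{b_i-1}\alpha_k\|\nabla\Psi(\Theta_k)\|^2 \to 0$; Lipschitz continuity of $\nabla\Psi$ then supplies the contradiction with $\|\nabla\Psi(\Theta_{a_i})-\nabla\Psi(\Theta_{b_i})\| \geq \varepsilon$. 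Replace your $\alpha_k^2$-weighted lemma by this $\alpha_k$-weighted one (equivalently, bound $\sum_k \alpha_k^{-1}\|\Theta_{k+1}-\Theta_k\|^2$ over the windows) and your argument closes exactly as the paper's does; tracking the gradient-norm increments rather than the iterate displacement is then an immaterial cosmetic difference.
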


We next consider the complexity of NG+ and the following theorem implies that $O(\epsilon^{-\frac{1}{\beta}})$ iterations are enough to guarantee that $\frac{1}{\hat{T}}\sum_{k=1}^{\hat{T}}\mathbb{E}[\|\Psi(\Theta_k)\|^2]\leq\epsilon$.

\begin{theorem}
Suppose that Assumption \ref{asp: global convergence} is satisfied and the step size is chosen as
$\alpha_k=\frac{2h_1^2}{L_\Psi h_2}k^{-\beta},$
where $\beta\in(0.5,1)$ and $\sigma_k \equiv \sigma$ for all $k$. Then we have
$$\frac{1}{\hat{T}}\sum_{k=1}^{\hat{T}}\mathbb{E}[\|\Psi(\Theta_k)\|^2]\leq \frac{L_\Psi h_2^2}{h_1^2}\hat{T}^{-1}+\frac{2\sigma^2}{h_2^2(1-\beta)}\hat{T}^{-\beta},$$
where $\hat{T}$ is the number of iterations. \end{theorem}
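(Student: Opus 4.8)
The plan is to reuse the one-step descent estimate already derived for the previous theorem. Under Assumption~\ref{asp: global convergence} with the constant noise level $\sigma_k\equiv\sigma$, that estimate reads
\[
\mathbb{E}_k[\Psi(\Theta_{k+1})]\le \Psi(\Theta_k)-\left(\frac{\alpha_k}{h_2}-\frac{L_\Psi\alpha_k^2}{2h_1^2}\right)\|\nabla\Psi(\Theta_k)\|_F^2+\frac{L_\Psi\alpha_k^2}{2h_1^2}\sigma^2 .
\]
The whole argument is then a matter of specializing the step size, summing, and converting the resulting weighted sum of squared gradient norms into the uniform average that appears in the statement.

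First I would substitute $\alpha_k=\frac{2h_1^2}{L_\Psi h_2}k^{-\beta}$ and abbreviate $c=\frac{2h_1^2}{L_\Psi h_2}$. The key algebraic feature of this particular constant is that $\frac{L_\Psi c^2}{2h_1^2}=\frac{c}{h_2}$, so the noise coefficient collapses to $\frac{L_\Psi\alpha_k^2}{2h_1^2}=\frac{c}{h_2}k^{-2\beta}$, while the descent coefficient simplifies to $\frac{\alpha_k}{h_2}-\frac{L_\Psi\alpha_k^2}{2h_1^2}=\frac{\alpha_k}{h_2}(1-k^{-\beta})$. Since $k^{-\beta}\le1$ we get $\alpha_k\le c$, which is exactly the step-size ceiling used in the previous theorem and certifies that this coefficient is nonnegative.

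Next I would take total expectations, write $D_k=\mathbb{E}[\|\nabla\Psi(\Theta_k)\|_F^2]$, and sum from $k=1$ to $\hat T$. The function-value increments telescope and are controlled by the lower bound, giving $\sum_{k=1}^{\hat T}\big(\mathbb{E}[\Psi(\Theta_k)]-\mathbb{E}[\Psi(\Theta_{k+1})]\big)\le\Psi(\Theta_1)-\Psi^*$. The two residual sums are then handled by integral comparison: since $2\beta>1$ the tail $\sum_k k^{-2\beta}$ is bounded by a constant, whereas since $\beta<1$ we have $\sum_{k=1}^{\hat T}k^{-\beta}\le\frac{\hat T^{1-\beta}}{1-\beta}$, and this last inequality is the source of the $\frac{1}{1-\beta}$ factor in the claimed noise term. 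To pass from the $\alpha_k$-weighted sum $\sum_k\alpha_k D_k$ to the uniform average $\frac{1}{\hat T}\sum_k D_k$, I would invoke the monotonicity of $\alpha_k=ck^{-\beta}$, so that $\alpha_k\ge\alpha_{\hat T}=c\hat T^{-\beta}$ for every $k\le\hat T$; dividing through by $\alpha_{\hat T}\hat T$ and substituting $c$ back then reproduces the coefficients $\frac{L_\Psi h_2^2}{h_1^2}$ and $\frac{2\sigma^2}{1-\beta}$ together with the advertised powers of $\hat T$.

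The main obstacle is precisely this final conversion and the bookkeeping of constants and $\hat T$-exponents. The descent coefficient $\frac{\alpha_k}{h_2}(1-k^{-\beta})$ degenerates to zero at $k=1$ and only dominates the clean lower bound $\frac{\alpha_k}{2h_2}$ once $k^{-\beta}\le\frac12$, so the first few indices must either be discarded or separately absorbed without disturbing the leading-order rate. How the weighted-to-uniform conversion splits the factor $\hat T^{-\beta}$ between the deterministic and the stochastic contributions is exactly what fixes whether the optimization term is displayed as $\hat T^{-1}$ or as $\hat T^{-(1-\beta)}$, and matching the precise constants (in particular the $h_2^2$ appearing in the denominator of the noise term) against the integral estimates is the delicate part I would verify carefully at the end.
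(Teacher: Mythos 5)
Your plan diverges from the paper's proof at the decisive step, and the divergence is fatal to the stated rate. If you telescope the weighted inequality first, you obtain (writing $D_k=\mathbb{E}[\|\nabla\Psi(\Theta_k)\|_F^2]$ and $c=\frac{2h_1^2}{L_\Psi h_2}$, and ignoring the small-$k$ degeneracy) $\sum_{k=1}^{\hat T}\frac{\alpha_k}{h_2}(1-k^{-\beta})D_k \le \Psi(\Theta_1)-\Psi^* + \frac{c\sigma^2}{h_2}\sum_{k=1}^{\hat T}k^{-2\beta}$, and since $2\beta>1$ the right-hand side is a constant independent of $\hat T$. Your conversion $\alpha_k\ge\alpha_{\hat T}=c\hat T^{-\beta}$ then gives $\frac{1}{\hat T}\sum_{k}D_k \le \frac{O(1)}{c\,\hat T^{1-\beta}}=O\big(\hat T^{-(1-\beta)}\big)$, and for every $\beta\in(0.5,1)$ one has $1-\beta<\beta<1$, so this is strictly weaker than the advertised $\hat T^{-1}+\hat T^{-\beta}$; for $\beta=0.9$, say, your route yields $\hat T^{-0.1}$ against the claimed $\hat T^{-0.9}$. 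The loss is structural, not bookkeeping: once the noise contribution has been summed into a constant, dividing by $\hat T\alpha_{\hat T}$ smears the factor $\hat T^{\beta}$ over both terms, and no "splitting" of that factor between the deterministic and stochastic parts can recover a $\hat T^{-\beta}$ noise term. You sensed this tension yourself (your remark about whether the optimization term comes out as $\hat T^{-1}$ or $\hat T^{-(1-\beta)}$) but left it unresolved, and as written the proposal cannot close it.

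The paper avoids this by dividing the one-step estimate by $\alpha_k$ \emph{before} summing, obtaining $\mathbb{E}[\|\nabla\Psi(\Theta_k)\|^2]\le \frac{2h_2}{\alpha_k}\big(\mathbb{E}\Psi(\Theta_k)-\mathbb{E}\Psi(\Theta_{k+1})\big)+O(\alpha_k)\sigma^2$. The function-value differences then no longer telescope, and the paper controls them by Abel (partial) summation together with a uniform bound $M$ on $\mathbb{E}\Psi(\Theta_k)$ --- this is the extra ingredient your plan is missing --- while the noise term retains its weight $\alpha_k\propto k^{-\beta}$, so that $\frac{1}{\hat T}\sum_k\alpha_k=O(\hat T^{-\beta})$, exactly the claimed noise rate. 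That said, two of your side remarks are well taken and in fact expose soft spots in the paper's own argument: the descent coefficient does vanish at $k=1$ (since $\alpha_1$ equals the ceiling $\frac{2h_1^2}{L_\Psi h_2}$, the implicit bound $\frac{\alpha_k}{h_2}-\frac{L_\Psi\alpha_k^2}{2h_1^2}\ge\frac{\alpha_k}{2h_2}$ holds only once $k^{-\beta}\le\frac{1}{2}$), and the $h_2^2$ in the noise denominator does not survive careful constant-tracking, since $\frac{2h_2}{\alpha_k}\cdot\frac{L_\Psi\alpha_k^2}{2h_1^2}=\frac{L_\Psi h_2\alpha_k}{h_1^2}$, which yields $\frac{2\sigma^2}{1-\beta}\hat T^{-\beta}$ rather than the paper's $\frac{2\sigma^2}{h_2^2(1-\beta)}\hat T^{-\beta}$; moreover, after the Abel step the paper silently drops the positive series $\sum_{k\ge 2}\big(\frac{2h_2}{\alpha_k}-\frac{2h_2}{\alpha_{k-1}}\big)\mathbb{E}\Psi(\Theta_k)$, which is what produces the $\hat T^{-1}$ (rather than $\hat T^{-(1-\beta)}$) optimization term. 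So your instincts about the constants were right, but the missing idea --- divide by $\alpha_k$ inside the sum and tame the resulting non-telescoping series via a bound on expected function values --- is exactly what separates your proposal from a proof of the statement.
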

The proof of the above two theorems can be found in the Appendix.

\subsection{Regret Analysis}

In this part, we consider the regret bound of one variant of NG+ under standard online convex optimization setting. The regret is defined as follows: 
\bee \mathcal{R}_T
= \sum_{t=1}^T  \psi_t(\Theta_t)  
    - \inf_{\Theta^* \in \mathcal{K}} \left (\sum_{t=1}^T  \psi_t(\Theta^*)  \right),\eee
 where $\psi_t:\mathcal{K}\rightarrow \mathbb{R}$ is a convex cost function,  $\Theta_t\in \mathcal{K}$ and $\mathcal{K}$ is a bounded convex set, i.e., $\forall X,Y\in \mathcal{K}$, we have $\|X-Y\|_F \leq \mathcal{D}$.
We analyze the following iteration process which can be regarded as an extension of the online Newton method \cite{hazan2007}:
\begin{equation}\label{update1}
\begin{aligned}
    L_t =L_{t-1}+\nabla \psi_t^\top \nabla \psi_t, \quad 
    \Theta_{t+1} = \Pi_\mathcal{K}(\W_t - \frac{1}{\alpha} L_t^{-1} G_t),
    \end{aligned}
\end{equation}
where $L_0 = \epsilon I$, $\alpha$ is the step size. A few necessary assumptions are listed below.

\begin{assumption}
 \label{asp: matrix exp concave} We assume each $\psi_t(\cdot)$ satisfies the following conditions:
 \begin{enumerate} \item For any $t=1,2,\cdots$, the function $\psi_t$ satisfies:
$\psi_t(X)\geq \psi_t(Y)+\langle\nabla \psi_t(Y), X-Y\rangle+\frac{\alpha}{2}\|X-Y\|_{\nabla \psi_t(Y)\nabla \psi_t(Y)^\top}^2,$
for $X,Y \in \mathcal{K},$ where $\|X-Y\|_{\nabla \psi_t(Y)\nabla \psi_t(Y)^\top}^2 = \mathrm{Tr}\left ((X-Y)^\top\nabla \psi_t(Y)\nabla \psi_t(Y)^\top(X-Y)\right).$
\item The norm of the gradient $\nabla \psi_t$ is bounded by $\mathcal{L}_G$, i.e., $\|\nabla \psi_t\|_F \leq \mathcal{L}_G$.
\end{enumerate}
\end{assumption}
When $n=1$, the function satisfying Assumption 4.1 is actually the $\alpha$-exp-concave function \citep{hazan2007logarithmic}. We summarize our logarithmic regret bound as follows.
\begin{theorem}
Let $\epsilon=\frac{2}{\alpha \mathcal{D}^2}$. If Assumption \ref{asp: matrix exp concave} is satisfied, then the regret $\mathcal{R}_T$ can be bounded by
$$\mathcal{R}_T \leq \frac{n}{\alpha}\log \alpha \mathcal{L}_G^2\mathcal{D}^2T.$$
\end{theorem}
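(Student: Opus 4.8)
The plan is to adapt the online Newton step (ONS) analysis of \cite{hazan2007} to matrix-valued iterates. Fix $\Theta^*\in\arg\min_{\Theta\in\mathcal{K}}\sum_{t=1}^T\psi_t(\Theta)$, write $G_t=\nabla\psi_t(\Theta_t)$ and $Z_t=\Theta_t-\Theta^*$, and for a positive definite $M\in\Rn^{n\times n}$ use the weighted norm $\|Z\|_M^2=\mathrm{Tr}(ZMZ^\top)$. To keep the three relevant quadratic forms on the same $n\times n$ space (which is what makes the dimension $n$, rather than $m$, appear in the bound), I read the recursion \eqref{update1} as $L_t=L_{t-1}+G_t^\top G_t$, the step as $\Theta_{t+1}=\Pi_\mathcal{K}(\Theta_t-\tfrac{1}{\alpha}G_tL_t^{-1})$ with $\Pi_\mathcal{K}$ the projection in the $L_t$-norm, and the curvature penalty in Assumption \ref{asp: matrix exp concave} as $\tfrac{\alpha}{2}\|X-Y\|_{G_t^\top G_t}^2$.

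\textbf{Step 1 (per-round splitting).} By the first part of Assumption \ref{asp: matrix exp concave},
\[
\psi_t(\Theta_t)-\psi_t(\Theta^*)\le \langle G_t,Z_t\rangle-\frac{\alpha}{2}\|Z_t\|_{G_t^\top G_t}^2 .
\]
The main effort is to bound the linear term. With $\widehat\Theta_{t+1}=\Theta_t-\tfrac{1}{\alpha}G_tL_t^{-1}$ the unprojected iterate, expanding $\|\widehat\Theta_{t+1}-\Theta^*\|_{L_t}^2$ and using $L_t^\top=L_t$ yields the identity
\[
\langle G_t,Z_t\rangle=\frac{\alpha}{2}\big(\|Z_t\|_{L_t}^2-\|\widehat\Theta_{t+1}-\Theta^*\|_{L_t}^2\big)+\frac{1}{2\alpha}\mathrm{Tr}(G_tL_t^{-1}G_t^\top).
\]
A generalized Pythagorean inequality for the $L_t$-weighted projection, $\|\Theta_{t+1}-\Theta^*\|_{L_t}^2\le\|\widehat\Theta_{t+1}-\Theta^*\|_{L_t}^2$, then lets me replace $\widehat\Theta_{t+1}$ by $\Theta_{t+1}=\Theta^*+Z_{t+1}$ at the cost of an inequality.

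\textbf{Step 2 (telescoping and potential bound).} Combining the two displays and using $L_t-G_t^\top G_t=L_{t-1}$, hence $\|Z_t\|_{L_t}^2-\|Z_t\|_{G_t^\top G_t}^2=\|Z_t\|_{L_{t-1}}^2$, gives the per-round estimate
\[
\psi_t(\Theta_t)-\psi_t(\Theta^*)\le\frac{\alpha}{2}\big(\|Z_t\|_{L_{t-1}}^2-\|Z_{t+1}\|_{L_t}^2\big)+\frac{1}{2\alpha}\mathrm{Tr}(G_tL_t^{-1}G_t^\top).
\]
Summing over $t=1,\dots,T$, the first bracket telescopes and is bounded by $\|Z_1\|_{L_0}^2=\epsilon\|Z_1\|_F^2\le\epsilon\mathcal{D}^2$, so with $\epsilon=\tfrac{2}{\alpha\mathcal{D}^2}$ its contribution is at most $1$. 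For the second sum I would invoke the matrix potential lemma: since $\mathrm{Tr}(G_tL_t^{-1}G_t^\top)=\mathrm{Tr}(L_t^{-1}(L_t-L_{t-1}))$ and $\log\det$ is concave, $\mathrm{Tr}(L_t^{-1}(L_t-L_{t-1}))\le\log\det L_t-\log\det L_{t-1}$, whence $\sum_t\mathrm{Tr}(G_tL_t^{-1}G_t^\top)\le\log\det L_T-\log\det L_0$. Finally $\lambda_{\max}(L_T)\le\epsilon+\sum_t\|G_t\|_2^2\le\epsilon+T\mathcal{L}_G^2$ (using $\|G_t\|_2\le\|G_t\|_F\le\mathcal{L}_G$ from the second part of Assumption \ref{asp: matrix exp concave}), so $\log\det L_T-\log\det L_0\le n\log(1+T\mathcal{L}_G^2/\epsilon)$. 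Substituting $\epsilon$ gives $\mathcal{R}_T\le 1+\tfrac{n}{2\alpha}\log(1+\tfrac{\alpha\mathcal{L}_G^2\mathcal{D}^2T}{2})$, which is of the claimed order $\tfrac{n}{\alpha}\log(\alpha\mathcal{L}_G^2\mathcal{D}^2T)$.

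\textbf{Main obstacle.} The two steps needing genuine care are the generalized Pythagorean inequality for the matrix projection in the $L_t$-norm and the matrix potential (log-determinant) lemma; both are matrix-trace analogues of scalar ONS facts, and the chief subtlety is bookkeeping the left- versus right-sided trace norms so that the curvature $G_t^\top G_t$, the projection metric $L_t$, and the exp-concavity penalty all act on the same $n\times n$ factor. Once these are pinned down, the telescoping and the eigenvalue bound are routine.
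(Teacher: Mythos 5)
Your proposal is correct and follows essentially the same route as the paper, which also proves the theorem via an ONS-style decomposition (exp-concavity bound, expansion of the projected step in the $L_t$-norm with the generalized Pythagorean inequality, and telescoping via $L_t-L_{t-1}$) combined with a matrix log-determinant elliptical potential lemma; your per-round splitting versus the paper's sum-then-regroup is a cosmetic difference. You have additionally, and correctly, resolved the paper's left/right transpose inconsistency in \eqref{update1} (the paper mixes $\nabla\psi_t^\top\nabla\psi_t$ with the direction $L_t^{-1}G_t$ and $G_tG_t^\top$ in its potential lemma) by keeping all quadratic forms on the $n\times n$ side, which is what the stated $n\log(\cdot)$ bound requires.
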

The proof is shown in the Appendix.

\section{Numerical Experiments}

\subsection{Image Classification}

The training of ResNet50 \cite{he2016deep} on ImageNet-1k
\cite{deng2009imagenet} dataset is one of the basic experiments in image classification \cite{mattson2019mlperf}.
We compare NG+ with LARS \cite{you2017large}, SGD with momentum (SGD for short)
and KFAC.  The experiments of LARS are based on the
implementation\footnote{\url{https://github.com/NUS-HPC-AI-Lab/LARS-ImageNet-PyTorch}}
and tuned in the same way as Table 3 in \cite{you2019large}. SGD is taken
from the default version in PyTorch. 
The
result of ADAM is not reported because it does not perform well in ResNet50 with
ImageNet-1k task. We do not
report the results of Shampoo since an efficient implementation of Shampoo in
PyTorch is not officially available and our current codes do not take advantage
training on heterogeneous hardwares. We
follow the standard settings and use the same basic dataset augmentation as the
official PyTorch
example 
\textbf{without changing the figure sizes throughout the training process.}  The training
process of all  methods are terminated  once the top-1 testing accuracy equals
or exceeds 75.9\%. All  codes are written in PyTorch and available in \url{https://github.com/yangorwell/NGPlus}.

\begin{table}[htb]
\centering
\caption{Detailed Statistics on image classification when top-1 testing accuracy achieves 75.9\%. }
{
\begin{tabular}{c|ccc}
\hline \hline
&\# Epoch & Total Time  &Time Per Epoch \\ \hline
    SGD & 76 & 10.9 h  & {517 s} \\ \hline
LARS &74 & 10.7 h & {521 s} \\\hline
KFAC & 42 & 8.0 h & 686 s\\ \hline
NG+ & 40& 6.7 h & 600 s\\\hline
  \hline
\end{tabular}
}
\label{ImageNet-stat}
\end{table}

We first consider a batch size $256$. Note that the SGD achieves top-1 testing
accuracy 75.9\% within 76 epochs, which is well tuned. The changes of testing
accuracy and training accuracy versus training time are reported in Figure
\ref{imagenet-comparison} and detailed statistics are shown in Table
\ref{ImageNet-stat}. We can see that NG+ performs best in the total training
time and only takes 40 epochs to reach 75.9\% top-1 testing accuracy. Compared
with SGD and LARS, NG+ has at least 45\% and 37\% reduction in the number of
epochs and training time, respectively. Although NG+ is better than KFAC for
only two epochs, it leads to 16\% reduction in terms of the computing time.

\begin{figure}
\caption{Numerical performance on ResNet50 on ImageNet-1k.
}
\centering
\vspace{0.5ex}
\begin{tabular}{cccc}
\includegraphics[width=0.3\textwidth]{./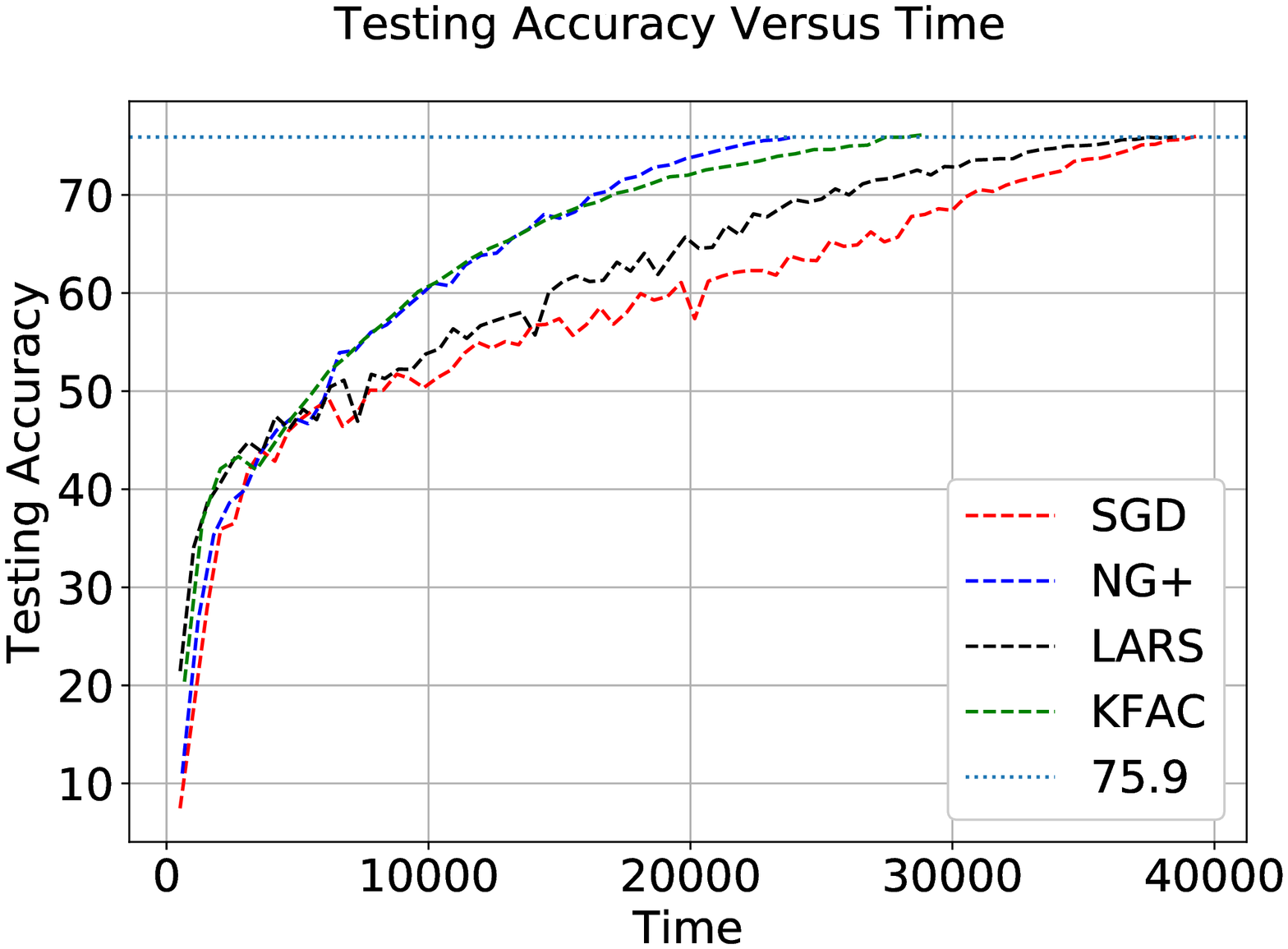}
\includegraphics[width=0.3\textwidth]{./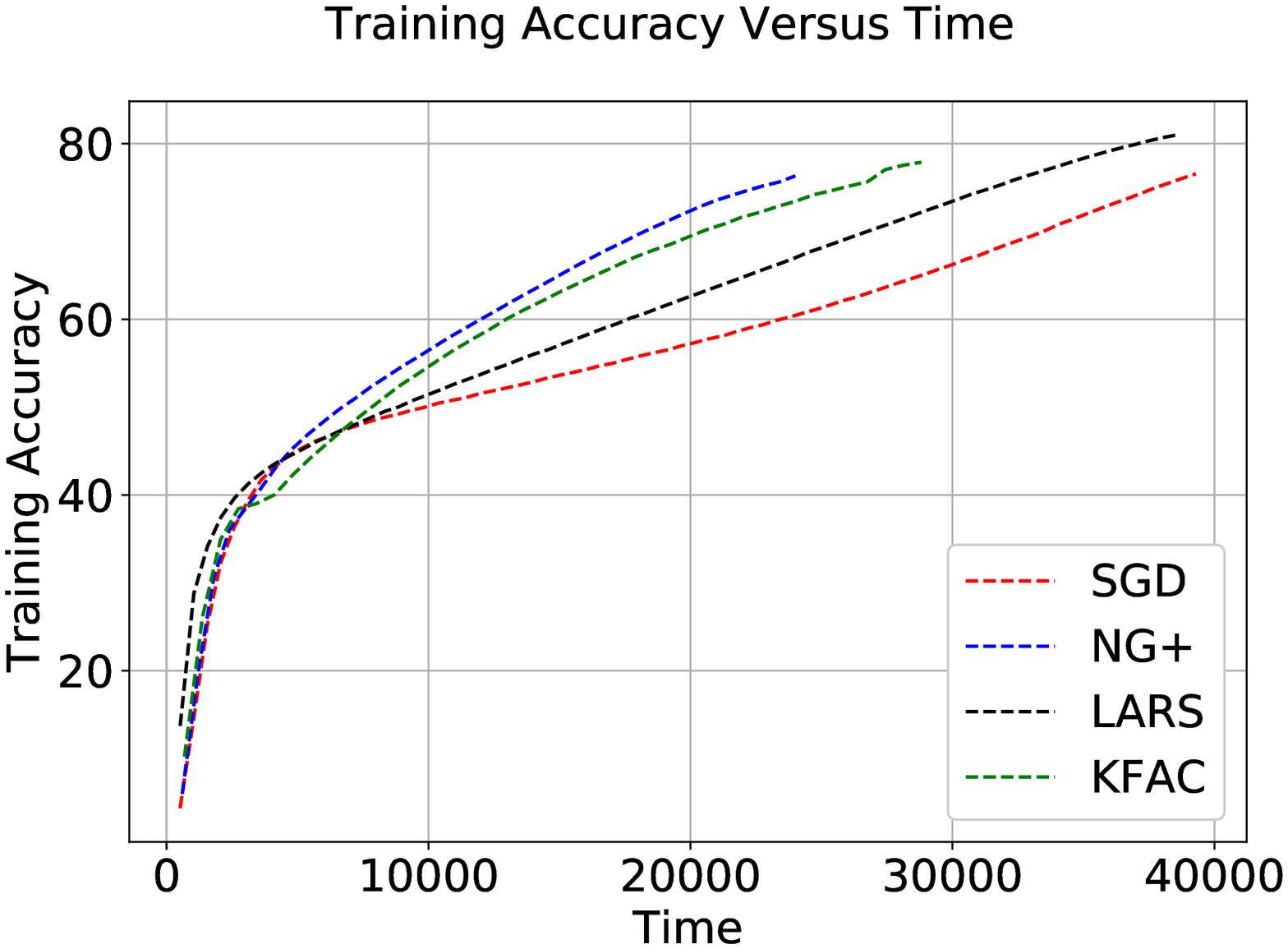}\\
\end{tabular}
\label{imagenet-comparison}
\end{figure}

\begin{figure}[ht]
\caption{Numerical performance of SchNet on QM9 and MatProj.}
\centering
\vspace{0.5ex}
\begin{tabular}{c}
\subfigure[QM9: Loss VS Time.] 
{\includegraphics[width=0.3\textwidth]{./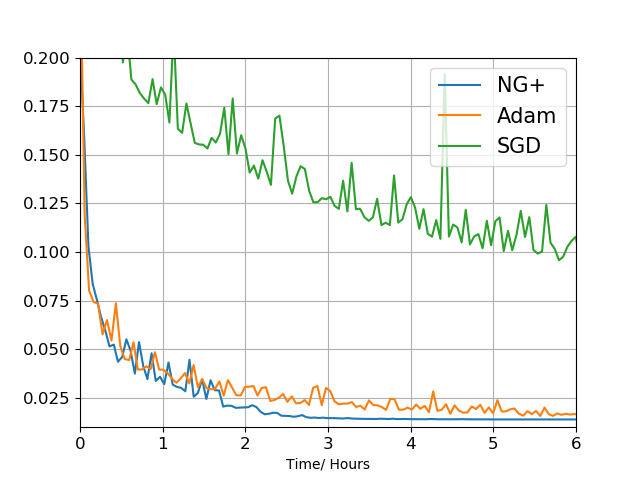}}
\subfigure[QM9: Loss VS Epoch.] {
\includegraphics[width=0.3\textwidth]{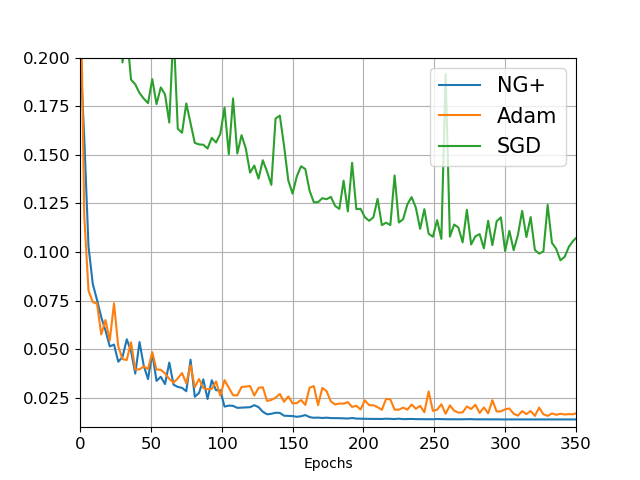}}\\
\subfigure[MatProj: Loss VS Time.] {
\includegraphics[width=0.3\textwidth]{./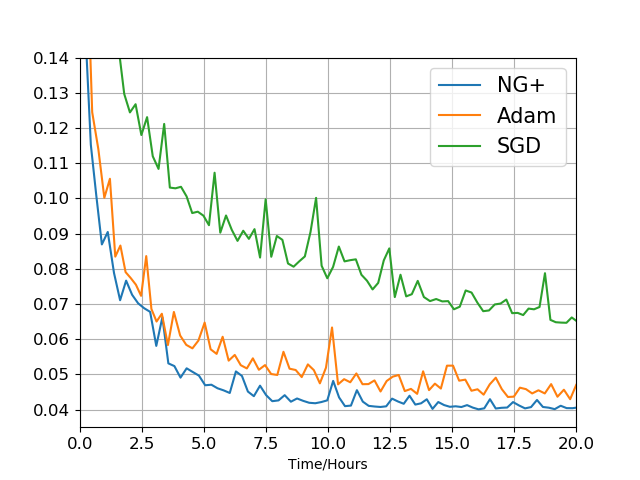}}
\subfigure[MatProj: Loss VS Epoch.] {
\includegraphics[width=0.3\textwidth]{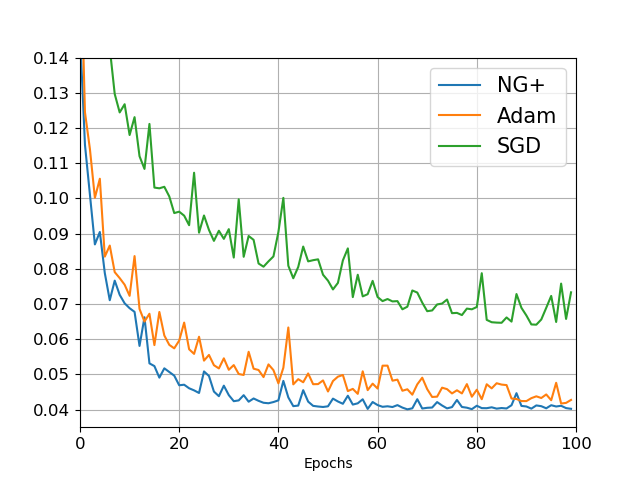}}
\end{tabular}
\label{qm9-performance}
\vspace{-2ex}
\end{figure}

We further consider the large-batch training. Since our GPUs are limited,  we
accumulate mini-batch gradients sampled in several steps to obtain the gradient
of a larger batch. Hence, only the statistics on the epochs and iterations are reported and can be found in
Table \ref{ImageNet-large-bs-stat}. The
number of epochs for the batch sizes 2048 and 4096 is 41. By running more experiments and
finding more reliable tuning strategies, it is expected that the number of epochs can be further reduced for the large-batch setting.

\begin{table}[htb]
\centering
{
\caption{Detailed Statistics of NG+ on ImageNet-1k for different Batch Sizes.}
\begin{tabular}{c|ccc}
\hline \hline
\# Batch Size& 256 & 2048  &4096  \\ \hline
\# Epoch& 40 & 41 & 41 \\ \hline
\#Iteration & 200k & 25.6k&12.8k \\ \hline
  \hline
\end{tabular}
\label{ImageNet-large-bs-stat}
}
\vspace{-2ex}
\end{table}

\subsection{Quantum Chemistry}
Deep learning has been applied to quantum chemistry problems. SchNet is a
well-known network architecture in quantum chemistry modeling
\cite{schutt2017schnet}. We compare NG+ with SGD and Adam \cite{kingma2014adam} by using SchNetPack
\footnote{\url{https://schnetpack.readthedocs.io/en/stable/index.html}} on the
benchmark datasets QM9 \cite{ramakrishnan2014quantum} and Materials Project (MatProj)
\cite{jain2013commentary}. The numerical results are presented in Figure
\ref{qm9-performance} where we can see the validation loss of NG+ is much better
than Adam in terms of both iteration and time on QM9 and MatProj datasets. For
example, on MatProj, when the validation loss attains 0.05, NG+ needs
less than 20 epochs and spends about 4 hours. On the other hand, Adam needs more
than 30 epochs and consumes about 7.5 hours.

\subsection{Neural Machine Translation}
We consider the neural machine translation task in this part. The implementation is based on the fairseq \cite{ott2019fairseq} and we use default transformer ``iwslt\_de\_en\_v2'' architecture therein. The IWSLT14 \cite{cettolo2014report} German-to-English dataset is used.  We present the training loss and validation loss through the training process in Figure \ref{transformer}. In the training set, the training loss curve of NG+ is always below that of Adam. We also report the BLEU score in the testing set. The result (mean and standard deviation) over 3 independent runs of NG+ is $34.79\pm0.09$ while the BLEU score of Adam is $34.75\pm0.08$.

\begin{figure}[ht]
\caption{Numerical performance on IWSLT14 (Transformer).
}
\centering
\begin{tabular}{cccc}
\includegraphics[width=0.3\textwidth]{./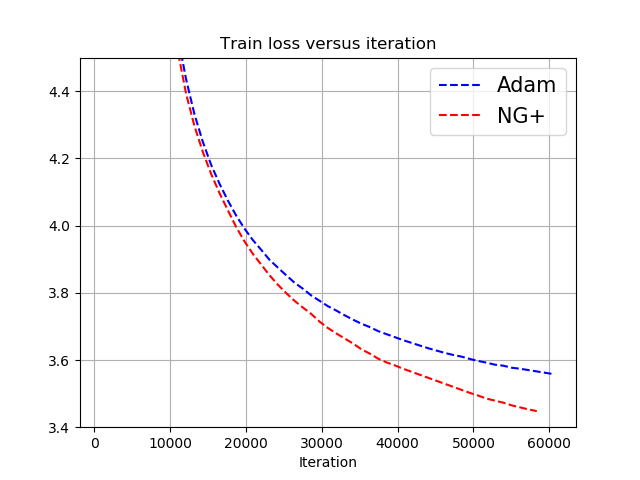}
\includegraphics[width=0.3\textwidth]{./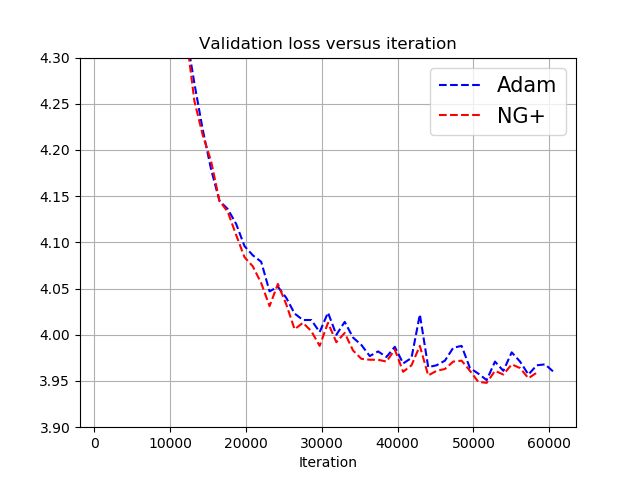}\\
\end{tabular}
\label{transformer}
\end{figure}
\subsection{Recommendation System}
We test the performance of NG+ on the recommendation models DLRM
\cite{naumov2019deep} which is widely considered in industry. The Criteo Ad
Kaggle dataset
\footnote{\url{https://labs.criteo.com/2014/02/kaggle-display-advertising-challenge-dataset/}}
in this part contains nearly 45 million samples in one week. The data
points of the first six days are used for training set while the others are used
for testing set. We train the model for one epoch, compare NG+ with SGD, AdaGrad \cite{duchi2011adaptive} and report the Click Through
Rate (CTR) on the training and testing set in Figure \ref{dlrm}. The best
testing accuracy of AdaGrad is 79.149\% while that of NG+ is 79.175\%. Our
proposed method has at least 0.025\% higher testing accuracy which is a good progress in this topic \cite{wang2017deep}.  

\begin{figure}[htbp]
\caption{Numerical performance on DLRM model.
}
\centering
\begin{tabular}{cccc}
\includegraphics[width=0.3\textwidth]{./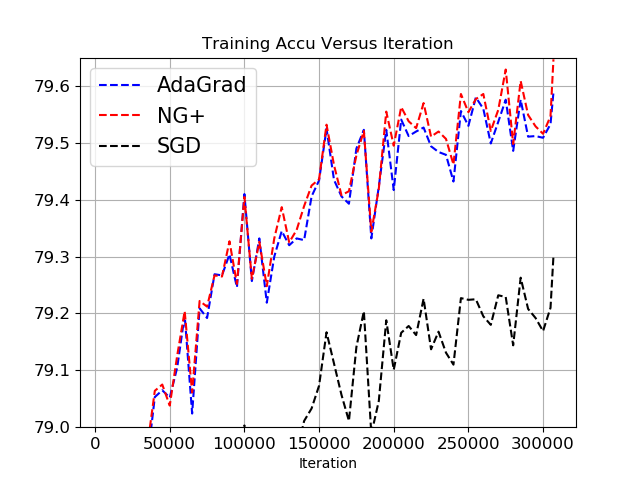}
\includegraphics[width=0.3\textwidth]{./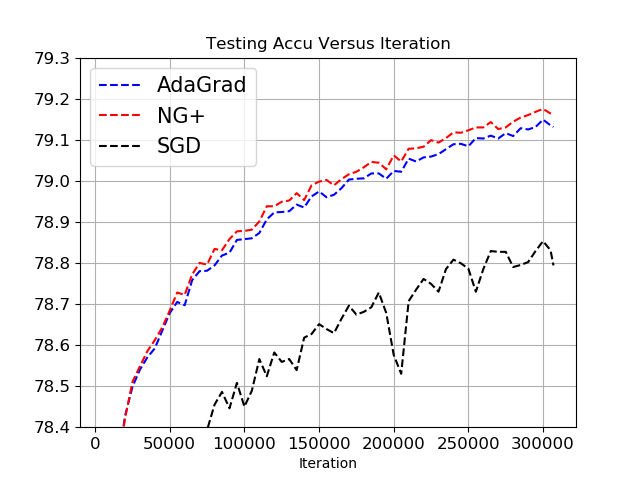}\\
\end{tabular}
\label{dlrm}
\end{figure}

\section{Conclusion}
In this paper, we propose a multi-step matrix-product natural gradient method
NG+. It is based on an intuitive extension of the Fisher information matrix to
the matrix space. 
The size of the coefficient matrix to be inverted is
reasonably small and the computational cost is reduced comparing to the
state-of-the-art methods including KFAC and Shampoo. 
The global convergence property is analyzed under
some mild conditions and a regret bound is given in the online convex
optimization case. Numerical experiments on important tasks, such as image
classification with ResNet50, quantum chemistry modeling with Schnet, neural
translation with Transformer and recommendation system with DLRM, show that NG+
is quite promising.

The performance of NG+ can be further improved in several aspects, including
accelerating the training process and improving the testing accuracy, with the help of a number of techniques such
as sketching, more reliable learning rates strategies and training on  heterogeneous  hardware. Implementing the fundamental operations such
as the computing of sample gradients and matrix inversion
in a more efficient and friendly way is also critical to second-order type methods.
 A few particularly
important topics of investigation are (i) a comprehensive study of the issue of
larger batch sizes, 
(ii) a careful design for the parallel efficiency, and (iii)
 extensive experiments on various large learning tasks.



\bibliographystyle{plain}
\bibliography{reference}

\begin{thebibliography}{10}

\bibitem{abadi2016tensorflow}
Martin Abadi, Paul Barham, Jianmin Chen, Zhifeng Chen, Andy Davis, Jeffrey
  Dean, Matthieu Devin, Sanjay Ghemawat, Geoffrey Irving, Michael Isard,
  Manjunath Kudlur, Josh Levenberg, Rajat Monga, Sherry Moore, Derek~G. Murray,
  Benoit Steiner, Paul Tucker, Vijay Vasudevan, Pete Warden, Martin Wicke, Yuan
  Yu, and Xiaoqiang Zheng.
\newblock Tensorflow: A system for large-scale machine learning.
\newblock In {\em 12th USENIX Symposium on Operating Systems Design and
  Implementation (OSDI 16)}, pages 265--283, 2016.

\bibitem{anil2020second}
Rohan Anil, Vineet Gupta, Tomer Koren, Kevin Regan, and Yoram Singer.
\newblock Scalable second order optimization for deep learning.
\newblock {\em arXiv preprint arXiv:2002.09018}, 2020.

\bibitem{byrd2016stochastic}
Richard~H Byrd, Samantha~L Hansen, Jorge Nocedal, and Yoram Singer.
\newblock A stochastic quasi-newton method for large-scale optimization.
\newblock {\em SIAM Journal on Optimization}, 26(2):1008--1031, 2016.

\bibitem{cettolo2014report}
Mauro Cettolo, Jan Niehues, Sebastian St{\"u}ker, Luisa Bentivogli, and
  Marcello Federico.
\newblock Report on the 11th iwslt evaluation campaign, iwslt 2014.
\newblock In {\em Proceedings of the International Workshop on Spoken Language
  Translation, Hanoi, Vietnam}, volume~57, 2014.

\bibitem{deng2009imagenet}
Jia Deng, Wei Dong, Richard Socher, Li-Jia Li, Kai Li, and Li~Fei-Fei.
\newblock Imagenet: A large-scale hierarchical image database.
\newblock In {\em 2009 IEEE conference on computer vision and pattern
  recognition}, pages 248--255. Ieee, 2009.

\bibitem{duchi2011adaptive}
John Duchi, Elad Hazan, and Yoram Singer.
\newblock Adaptive subgradient methods for online learning and stochastic
  optimization.
\newblock {\em Journal of machine learning research}, 12(7), 2011.

\bibitem{goldfarb2020practical}
Donald Goldfarb, Yi~Ren, and Achraf Bahamou.
\newblock Practical quasi-newton methods for training deep neural networks.
\newblock {\em arXiv preprint arXiv:2006.08877}, 2020.

\bibitem{goyal2017accurate}
Priya Goyal, Piotr Doll{\'a}r, Ross Girshick, Pieter Noordhuis, Lukasz
  Wesolowski, Aapo Kyrola, Andrew Tulloch, Yangqing Jia, and Kaiming He.
\newblock Accurate, large minibatch {SGD}: Training {I}mage{N}et in 1 hour.
\newblock ArXiv:1706.02677, 2017.

\bibitem{hazan2007}
Elad Hazan, Amit Agarwal, and Satyen Kale.
\newblock Logarithmic regret algorithms for online convex optimization.
\newblock {\em Machine Learning}, 69(2-3):169--192, 2007.

\bibitem{he2016deep}
Kaiming He, Xiangyu Zhang, Shaoqing Ren, and Jian Sun.
\newblock Deep residual learning for image recognition.
\newblock In {\em IEEE Conference on Computer Vision and Pattern Recognition},
  pages 770--778, 2016.

\bibitem{jain2013commentary}
Anubhav Jain, Shyue~Ping Ong, Geoffroy Hautier, Wei Chen, William~Davidson
  Richards, Stephen Dacek, Shreyas Cholia, Dan Gunter, David Skinner, Gerbrand
  Ceder, and Kristin~A. Persson.
\newblock Commentary: The materials project: A materials genome approach to
  accelerating materials innovation.
\newblock {\em APL Materials}, 1(1):011002, 2013.

\bibitem{KeskarMNST17}
Nitish~Shirish Keskar, Dheevatsa Mudigere, Jorge Nocedal, Mikhail Smelyanskiy,
  and Ping Tak~Peter Tang.
\newblock On large-batch training for deep learning: Generalization gap and
  sharp minima.
\newblock In {\em 5th International Conference on Learning Representations,
  {ICLR} 2017, Toulon, France, April 24-26, 2017, Conference Track
  Proceedings}. OpenReview.net, 2017.

\bibitem{kingma2014adam}
Diederik~P Kingma and Jimmy Ba.
\newblock Adam: A method for stochastic optimization.
\newblock {\em arXiv preprint arXiv:1412.6980}, 2014.

\bibitem{martens2015optimizing}
James Martens and Roger Grosse.
\newblock Optimizing neural networks with kronecker-factored approximate
  curvature.
\newblock In {\em International conference on machine learning}, pages
  2408--2417, 2015.

\bibitem{mattson2019mlperf}
Peter Mattson, Christine Cheng, Cody Coleman, Greg Diamos, Paulius
  Micikevicius, David Patterson, Hanlin Tang, Gu-Yeon Wei, Peter Bailis, Victor
  Bittorf, et~al.
\newblock Mlperf training benchmark.
\newblock {\em arXiv preprint arXiv:1910.01500}, 2019.

\bibitem{naumov2019deep}
Maxim Naumov, Dheevatsa Mudigere, Hao-Jun~Michael Shi, Jianyu Huang, Narayanan
  Sundaraman, Jongsoo Park, Xiaodong Wang, Udit Gupta, Carole-Jean Wu,
  Alisson~G Azzolini, et~al.
\newblock Deep learning recommendation model for personalization and
  recommendation systems.
\newblock {\em arXiv preprint arXiv:1906.00091}, 2019.

\bibitem{ott2019fairseq}
Myle Ott, Sergey Edunov, Alexei Baevski, Angela Fan, Sam Gross, Nathan Ng,
  David Grangier, and Michael Auli.
\newblock fairseq: A fast, extensible toolkit for sequence modeling.
\newblock In {\em Proceedings of NAACL-HLT 2019: Demonstrations}, 2019.

\bibitem{paszke2019pytorch}
Adam Paszke, Sam Gross, Francisco Massa, Adam Lerer, James Bradbury, Gregory
  Chanan, Trevor Killeen, Zeming Lin, Natalia Gimelshein, Luca Antiga, et~al.
\newblock Pytorch: An imperative style, high-performance deep learning library.
\newblock {\em arXiv preprint arXiv:1912.01703}, 2019.

\bibitem{pilanci2017newton}
Mert Pilanci and Martin~J Wainwright.
\newblock Newton sketch: A near linear-time optimization algorithm with
  linear-quadratic convergence.
\newblock {\em SIAM Journal on Optimization}, 27(1):205--245, 2017.

\bibitem{ramakrishnan2014quantum}
Raghunathan Ramakrishnan, Pavlo~O Dral, Matthias Rupp, and O~Anatole
  Von~Lilienfeld.
\newblock Quantum chemistry structures and properties of 134 kilo molecules.
\newblock {\em Scientific data}, 1(1):1--7, 2014.

\bibitem{ren2021kronecker}
Yi~Ren and Donald Goldfarb.
\newblock Kronecker-factored quasi-newton methods for convolutional neural
  networks.
\newblock {\em arXiv preprint arXiv:2102.06737}, 2021.

\bibitem{RobMon51}
Herbert Robbins and Sutton Monro.
\newblock A stochastic approximation method.
\newblock {\em Ann. Math. Stat.}, 22:400--407, 1951.

\bibitem{roosta2019sub}
Farbod Roosta-Khorasani and Michael~W Mahoney.
\newblock Sub-sampled newton methods.
\newblock {\em Mathematical Programming}, 174(1):293--326, 2019.

\bibitem{schutt2017schnet}
KT~Sch{\"u}tt, P-J Kindermans, HE~Sauceda, S~Chmiela, A~Tkatchenko, and K-R
  M{\"u}ller.
\newblock Schnet: a continuous-filter convolutional neural network for modeling
  quantum interactions.
\newblock In {\em Proceedings of the 31st International Conference on Neural
  Information Processing Systems}, pages 992--1002, 2017.

\bibitem{shallue2019measuring}
Christopher~J Shallue, Jaehoon Lee, Joseph Antognini, Jascha Sohl-Dickstein,
  Roy Frostig, and George~E Dahl.
\newblock Measuring the effects of data parallelism on neural network training.
\newblock {\em Journal of Machine Learning Research}, 20:1--49, 2019.

\bibitem{wang2017deep}
Ruoxi Wang, Bin Fu, Gang Fu, and Mingliang Wang.
\newblock Deep \& cross network for ad click predictions.
\newblock In {\em Proceedings of the ADKDD'17}, pages 1--7. 2017.

\bibitem{yang2020S2QN}
Minghan Yang, Dong Xu, Hongyu Chen, Zaiwen Wen, and Mengyun Chen.
\newblock Enhance curvature information by structured stochastic quasi-newton
  methods.
\newblock In {\em Proceedings of the IEEE/CVF Conference on Computer Vision and
  Pattern Recognition}, 2021.

\bibitem{yang2020sketchy}
Minghan Yang, Dong Xu, Zaiwen Wen, Mengyun Chen, and Pengxiang Xu.
\newblock Sketchy empirical natural gradient methods for deep learning.
\newblock {\em arXiv preprint arXiv:2006.05924}, 2020.

\bibitem{you2017LARS}
Yang You, Igor Gitman, and Boris Ginsburg.
\newblock Large batch training of convolutional networks.
\newblock {\em arXiv preprint arXiv:1708.03888}, 2017.

\bibitem{you2017large}
Yang You, Igor Gitman, and Boris Ginsburg.
\newblock Large batch training of convolutional networks.
\newblock {\em arXiv preprint arXiv:1708.03888}, 2017.

\bibitem{you2019large}
Yang You, Jonathan Hseu, Chris Ying, James Demmel, Kurt Keutzer, and Cho-Jui
  Hsieh.
\newblock Large-batch training for lstm and beyond.
\newblock In {\em Proceedings of the International Conference for High
  Performance Computing, Networking, Storage and Analysis}, pages 1--16, 2019.

\end{thebibliography}

\newpage
\appendix
\section{Implementation Details}
In this section, we describe the implementation details of the numerical experiments.

\begin{itemize}
\item ResNet50 on ImageNet-1k: 
\begin{itemize}
\item We first consider the case where the batch size is 256.
We use the official implementation of ResNet50 (also known as ResNet50 v1.5) in PyTorch. The detailed network structures can be found in the website: \url{https://pytorch.org/docs/stable/_modules/torchvision/models/resnet.html}. We use the linear warmup strategy \cite{goyal2017accurate} in the first 5 epochs for SGD, KFAC and NG+. 
\begin{itemize}
\item SGD uses the cosine learning rate strategy \[ \alpha_k = 0.001 + 0.5 * (\alpha_0 - 0.001) * (1 + \text{cos(epoch\_k / max\_epoch} * \pi))\footnote{Note that $k$ is the number of the iteration while epoch\_k means the number of passing the whole dataset. epoch\_k = $k$/(\#dataset/batch\_size).} \] for given parameters max\_epoch and the initial learning rate $\alpha_0$. The hyper-parameters are the same as those on the website \footnote{\url{https://gitee.com/mindspore/mindspore/blob/r0.7/model_zoo/official/cv/resnet/src/lr_generator.py}} and the result reported here achieves top-1 testing accuracy 75.9\% within 76 epochs. This is better than the result of using the diminishing learning rate strategy in official PyTorch example\footnote{\url{https://github.com/pytorch/examples/tree/master/imagenet}}, which needs nearly 90 epochs.
\item LARS uses the codes from the website\footnote{\url{https://github.com/NUS-HPC-AI-Lab/LARS-ImageNet-PyTorch/blob/main/lars.py}} and we use the recommended parameters from \cite{you2019large}.
\item KFAC uses the exponential strategy \[ \alpha_k = \alpha_0 * (1 - \text{epoch\_k/max\_epoch)}^{\mathcal{\widetilde D}},\] for given the max\_epoch, decay rate $\mathcal{\widetilde D}$ and the initial learning rate $\alpha_0$. The max\_epoch is set to be 60. The initial learning rate is from \{0.05, 0.1, 0.15\} and initial damping $\lambda_0$ is from  \{0.05, 0.1, 0.15\} and we report the best results among them. The damping is set to be $\lambda_0\cdot (0.87)^{\text{epoch\_k}/10}$. The update frequency which means $\mathcal{T}$ in multi-step strategy is 500.
\item NG+ uses the exponential strategy by letting max\_epoch to be 52, the initial learning rate to be 0.18 and decay\_rate to be 5, respectively. The damping parameter $\lambda_k$ is set to be 0.16$\cdot (0.8)^{\text{epoch\_k}/10}$. The update frequency is 500.
\end{itemize}
\item For large-batch training, the detailed hyper-parameters for each batch-size are listed in Table \ref{large-bs-table}.
\begin{table}[h]
\centering
\begin{tabular}{ccccccc}
\toprule
Batch Size & $\alpha_{\text{warmup}}$ & warm-up epoch & $\alpha_0 $ & decay\_rate &max\_epoch & damping $\lambda$ \\ 
\midrule
2, 048 &0.01&5&1.3 &5&50&0.2$\cdot (0.7)^{(\text{epoch\_k}/10)}$\\
4, 096 &0.01&5&2.6 &5&50&0.3$\cdot (0.7)^{(\text{epoch\_k}/10)}$\\
\bottomrule
\end{tabular}
\vspace{1ex}
 \caption{Detailed hyper-parameters for different batch sizes.}
\label{large-bs-table}
\end{table}
\end{itemize}
\item SchNet:

We use SchNetPack\footnote{\url{https://github.com/atomistic-machine-learning/schnetpack} }  and implement our proposed NG+ within this package.
The weight decay is set to be 1e-5 for all the experiments. 
To train the model efficiently, it is highly recommended to put all the datasets on an Solid State Disk(SSD).
\begin{itemize}
\item For NG+, we use the learning rate schedule $\alpha_k= \text{2e-4} \cdot \text{max}(\text{epoch\_k} - 10, 1)^{-0.5}$. Damping is 0.17 for QM9, 0.6 for MatProj. The update frequency is set to 200.
\item For Adam, we grid search the learning rate from {1e-5, 2e-5, 4e-5, 1e-4, 2e-4, 4e-4} and choose 1e-4 for both MatProj and QM9. We also consider the schedule used in NG+ but it does not perform well in Adam. 
\item For SGD, we set the learning rate 4e-4 for QM9, 5e-3 for MatProj.
\end{itemize}
\item Transformer:

We use the fairseq\footnote{\url{https://github.com/pytorch/fairseq/}} and implement our proposed NG+ within this package. The batch size and the weight decay is set to be 4096 and 1e-4, respectively.
\begin{itemize}
\item For Adam, we set the initial learning rate to be 5e-4, eps to be 1e-8, $\beta_1$ to be $0.9$, $\beta_2$ to be $0.98$, the number of warmup-updates to be 10000 and the learning rate scheduler to be inverse\_sqrt, which is similar to the recommended example \footnote{\url{https://github.com/pytorch/fairseq/tree/v0.8.0/examples/translation}}.
\item For NG+, the initial learning rate is set to be \text{6e-4} and the damping is set to be $0.3\cdot0.87^{(\text{epoch\_k}/10)}$. Other parameters are the same as Adam.
\end{itemize}
\item DLRM:

We use the code in the website \footnote{\url{https://github.com/facebookresearch/dlrm}} and change the optimizer to our proposed NG+.
None of the optimizer applies the weight decay.
\begin{itemize}
\item For NG+, we set the learning rate 0.015, damping 1.0 and the update frequency 1000. 
\item For Adagrad, we set the learning rate 0.01. 
\item For SGD, we set the learning rate 0.1, with all other parameters identical to the file\footnote{\url{https://github.com/facebookresearch/dlrm/blob/master/bench/dlrm_s_criteo_kaggle.sh}}.
\end{itemize}
\end{itemize}

\section{Proof of Theorem 2}
\begin{proof}
We define $\mathbb{E}_k[\cdot]=\mathbb{E}[\cdot|\Theta_{k},\cdots,\Theta_0]$. By Assumption 1, it holds:
\be
\label{estimate-theo}
\begin{aligned}
    \mathbb{E}_k[\Psi(\Theta_{k+1})]&\leq \mathbb{E}_k\left[\Psi(\Theta_k)+\langle\nabla\Psi(\Theta_k),\Theta_{k+1}-\Theta_k\rangle+\frac{L_\Psi}{2}\|\Theta_{k+1}-\Theta_k\|_F^2\right]\\
    & = \Psi(\Theta_k) + \mathbb{E}_k\left[\langle\nabla\Psi(\Theta_k),-\alpha_k L_k^{-1}G_k\rangle\right]+\mathbb{E}_k\left[\frac{L_\Psi \alpha_k^2}{2}\|L_k^{-1}G_k\|_F^2\right]\\
    & \leq \Psi(\Theta_k) + \langle\nabla\Psi(\Theta_k),-\alpha_k L_k^{-1}\nabla\Psi(\Theta_k)\rangle+\frac{L_\Psi \alpha_k^2}{2h_1^2}\mathbb{E}_k\left[\|G_k\|_F^2\right]\\
    & \leq \Psi(\Theta_k) - \frac{\alpha_k}{h_2}\|\nabla\Psi(\Theta_k)\|_F^2+\frac{L_\Psi \alpha_k^2}{2h_1^2}\left(\|\nabla\Psi(\Theta_k)\|_F^2+\sigma_k^2\right)\\
    & \leq \Psi(\Theta_k) - \left(\frac{\alpha_k}{h_2}-\frac{L_\Psi \alpha_k^2}{2h_1^2}\right)\|\nabla\Psi(\Theta_k)\|_F^2+\frac{L_\Psi \alpha_k^2}{2h_1^2}\sigma_k^2.
\end{aligned}
\ee
Using $\alpha_k\leq\frac{2h_1^2}{L_\Psi h_2}$, summing over above inequality from $0$ to $\infty$, and taking the expectation yield
\begin{align*}
    \sum_{k=0}^{\infty}\frac{\alpha_k}{2h_2} \mathbb{E} \|\nabla\Psi(\Theta_k)\|_F^2\leq \Psi(\Theta_0) - \Psi^*+ \frac{L_\Psi}{2h_1^2} \sum_{k=0}^\infty \alpha_k^2\sigma_k^2 < \infty.
\end{align*}
Since $\sum_{k=0}^{\infty}\alpha_k=\infty$, we have
$\liminf_{k\rightarrow\infty}\|\nabla\Psi(\Theta_k)\|_F=0$ with probability 1.

We next prove $\lim_{k\rightarrow\infty}\|\nabla\Psi(\Theta_k)\|_F=0$ by contradiction. Suppose that there exists $\epsilon>0$ and two increasing sequences $\{a_i\}_{i=1}^\infty$ and $\{b_i\}_{i=1}^\infty$ such that $a_i<b_i$ and 
\begin{equation*}
   \left\{
\begin{aligned}
\|\nabla\Psi(\Theta_k)\|\geq2\epsilon, & & k=a_i, \\
\|\nabla\Psi(\Theta_k)\|\geq\epsilon, & & a_i<k<b_i, \\
\|\nabla\Psi(\Theta_k)\|\leq\epsilon, & & k=b_i.
\end{aligned}
\right.
\end{equation*}

Hence, it follows that
$$\epsilon^2 \sum_{i=1}^\infty\sum_{k=a_i}^{b_i-1}\alpha_k\leq \sum_{i=1}^\infty\sum_{k=a_i}^{b_i-1}\alpha_k\|\nabla\Psi(\Theta_k)\|^2\leq\sum_{k=1}^\infty\alpha_k\|\nabla\Psi(\Theta_k)\|^2\leq\infty,$$
which implies $\lim_{i\rightarrow\infty}\sum_{k=a_i}^{b_i-1}\alpha_k=0$.

In addition, we have
\begin{align*}
    \mathbb{E}\left[\sum_{k=a_i}^{b_i-1} \alpha_k^{-1}\|\Theta_{k+1}-\Theta_k\|_2^2\right] &= \mathbb{E}\left[\sum_{k=a_i}^{b_i-1} \alpha_k\|L_k^{-1}G_k\|_2^2\right] 
    \leq \mathbb{E}\left[\sum_{k=a_i}^{b_i-1} \frac{\alpha_k}{h_1^2}\left(\|\nabla\Psi(\Theta_k)\|_2^2+\sigma_k^2\right)\right] < \infty.
\end{align*}

By H\"older inequality, we obtain
$$\|\Theta_{a_i}-\Theta_{b_i}\|_2^2 \leq \left(\sum_{k=a_i}^{b_i-1}\alpha_k\right)\left(\sum_{k=a_i}^{b_i-1} \alpha_k^{-1}\|\Theta_{k+1}-\Theta_k\|_2^2\right), $$
which implies $\lim_{i\rightarrow\infty}\|\Theta_{a_i}-\Theta_{b_i}\|_2^2=0$. However, we have $\|\nabla\Psi(\Theta_{a_i})-\nabla\Psi(\Theta_{b_i})\|_2\geq\epsilon$, which is a contradiction by the Lipschitz-continuous property of $\nabla \Psi(\Theta)$. This finishes the proof. \end{proof}

\section{Proof of Theorem 3}
\begin{proof}
From \eqref{estimate-theo}, we have
$$\|\nabla\Psi(\Theta_k)\|^2\leq \frac{2h_2}{\alpha_k}\Psi(\Theta_k)-\frac{2h_2}{\alpha_k}\mathbb{E}_k[\Psi(\Theta_{k+1})]+\frac{L_\Psi \alpha_k}{h_1^2h_2}\sigma^2.$$

Taking expectation and summing over $k$ from $1$ to $N$, we obtain
\begin{align*}
    \sum_{k=1}^N\mathbb{E}[\|\nabla\Psi(\Theta_k)\|^2] &\leq \frac{2h_2}{\alpha_1}\mathbb{E}\Psi(\Theta_1)+\sum_{k=2}^N(\frac{2h_2}{\alpha_k}-\frac{2h_2}{\alpha_{k-1}})\mathbb{E}\Psi(\Theta_k)+\sum_{k=1}^N\frac{L_\Psi \alpha_k}{h_1^2h_2}\sigma^2\\
    &\leq \frac{2h_2}{\alpha_1}M+\sum_{k=1}^N\frac{L_\Psi\sigma^2}{h_1^2h_2}\alpha_k\\
    &\leq \frac{L_\Psi h_2^2}{h_1^2}+\frac{2\sigma^2}{h_2^2(1-\beta)}(N^{1-\beta}-1),
\end{align*}
which proves the theorem.
\end{proof}
\section{Proof of Theorem 5}

We first give an example that satisfies the Assumption 4.1.

\textbf{Example}
Suppose that $\|X\|\leq 1$, $\|Y\|_2\leq R$ and $\mathcal{K}=\{W\in\mathbb{R}^{m\times n}|\|W\|_2\leq R \}$ for a given value $R$, then $f(W)=\frac{1}{2}\|W^\top X-Y\|_2^2$ satisfies Assumption 4.1.

\begin{proof}
With a slight abuse of the notation, we define $f(\text{vec}(W))=f(W)$.
We have $\nabla f(W)=XX^\top W-XY^\top$ and $\nabla^2 f(\text{vec}(W))=I\otimes XX^\top$. To show that $f(W)$ satisfies Assumption 4.1, we only need to prove that there exists a $\alpha$ such that for all $W$, we have $\nabla^2 f(\text{vec}(W))\succeq \alpha I\otimes \nabla f(W)\nabla f(W)^\top$.
\begin{align*}
    \nabla f(W)\nabla f(W)^\top&=X(X^\top W-Y^\top)(X^\top W-Y^\top)^\top X^\top\\
    &\preceq X 4R^2 I X^\top\\
    &=4R^2 XX^\top.
\end{align*}
Thus for $\alpha=\frac{1}{4R^2}$, we have $\nabla^2 f(\text{vec}(W))\succeq \alpha I\otimes \nabla f(W)\nabla f(W)^\top$.
\end{proof}
To prove Theorem 5, we give two required lemmas in the next.
\begin{lemma} \label{lemma: regret bound}
Suppose that $\alpha\leq \alpha_0$ and Assumption 4 holds, then the regret can be bounded by
$$\mathcal{R}_T\leq\frac{1}{2\alpha}\sum_{t=1}^T \|G_t\|_{L_t^{-1}}^2 +\frac{\alpha\epsilon \mathcal{D}^2}{2}.$$
\end{lemma}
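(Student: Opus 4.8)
The plan is to adapt the Online Newton Step regret argument of \cite{hazan2007} to the matrix setting. Throughout write $\nabla\psi_t=\nabla\psi_t(\Theta_t)=G_t$, abbreviate $\langle A,B\rangle=\Tr(A^\top B)$, and for a symmetric positive definite $L$ set $\|C\|_L^2=\Tr(C^\top L C)$; note $L_t\succ 0$ for all $t$ since $L_0=\epsilon I\succ 0$ and each increment $\nabla\psi_t\nabla\psi_t^\top$ is positive semidefinite, so every quantity below is well defined. First I would instantiate the curvature inequality in Assumption \ref{asp: matrix exp concave} with $Y=\Theta_t$ and $X=\Theta^*$ (here $\alpha\le\alpha_0$ is exactly what makes this $\alpha$-exp-concave bound available) and rearrange it into the per-step estimate
\begin{equation*}
\psi_t(\Theta_t)-\psi_t(\Theta^*)\le \langle G_t,\Theta_t-\Theta^*\rangle-\frac{\alpha}{2}\|\Theta_t-\Theta^*\|_{\nabla\psi_t\nabla\psi_t^\top}^2.
\end{equation*}

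Next I would convert the linear term into telescoping distances using the update rule. Let $\hat\Theta_{t+1}=\Theta_t-\frac{1}{\alpha}L_t^{-1}G_t$ be the unprojected iterate, so that $G_t=\alpha L_t(\Theta_t-\hat\Theta_{t+1})$. Applying the three-point identity $2\langle L(a-b),a-c\rangle=\|a-b\|_L^2+\|a-c\|_L^2-\|b-c\|_L^2$ with $L=L_t$, $a=\Theta_t$, $b=\hat\Theta_{t+1}$, $c=\Theta^*$, together with $\|\Theta_t-\hat\Theta_{t+1}\|_{L_t}^2=\frac{1}{\alpha^2}\|G_t\|_{L_t^{-1}}^2$, yields
\begin{equation*}
\langle G_t,\Theta_t-\Theta^*\rangle=\frac{1}{2\alpha}\|G_t\|_{L_t^{-1}}^2+\frac{\alpha}{2}\|\Theta_t-\Theta^*\|_{L_t}^2-\frac{\alpha}{2}\|\hat\Theta_{t+1}-\Theta^*\|_{L_t}^2.
\end{equation*}
At this point I would invoke the Pythagorean inequality $\|\Theta_{t+1}-\Theta^*\|_{L_t}^2\le\|\hat\Theta_{t+1}-\Theta^*\|_{L_t}^2$, valid because $\Theta^*\in\mathcal{K}$ and $\Theta_{t+1}$ is the projection of $\hat\Theta_{t+1}$ onto $\mathcal{K}$, to replace $\hat\Theta_{t+1}$ by $\Theta_{t+1}$.

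The final step is the telescoping. Using the curvature recursion $\nabla\psi_t\nabla\psi_t^\top=L_t-L_{t-1}$ from \eqref{update1} and the identity $\|C\|_{L_t}^2-\|C\|_{\nabla\psi_t\nabla\psi_t^\top}^2=\|C\|_{L_{t-1}}^2$, the penalty from the first display combines with $\frac{\alpha}{2}\|\Theta_t-\Theta^*\|_{L_t}^2$ to collapse it to $\frac{\alpha}{2}\|\Theta_t-\Theta^*\|_{L_{t-1}}^2$. Substituting back gives
\begin{equation*}
\psi_t(\Theta_t)-\psi_t(\Theta^*)\le\frac{1}{2\alpha}\|G_t\|_{L_t^{-1}}^2+\frac{\alpha}{2}\left(\|\Theta_t-\Theta^*\|_{L_{t-1}}^2-\|\Theta_{t+1}-\Theta^*\|_{L_t}^2\right).
\end{equation*}
Summing over $t=1,\dots,T$ telescopes the second group down to $\frac{\alpha}{2}\|\Theta_1-\Theta^*\|_{L_0}^2$ (dropping the nonpositive final term), and since $L_0=\epsilon I$ and $\mathcal{K}$ has diameter $\mathcal{D}$ this is at most $\frac{\alpha\epsilon\mathcal{D}^2}{2}$, which is exactly the claimed bound.

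I expect the main obstacle to be the projection step: the non-expansiveness $\|\Theta_{t+1}-\Theta^*\|_{L_t}\le\|\hat\Theta_{t+1}-\Theta^*\|_{L_t}$ holds only if $\Pi_\mathcal{K}$ is the \emph{generalized} projection in the $L_t$-metric rather than the Euclidean one, so I would read the update \eqref{update1} this way (or supply a short lemma that the $L_t$-weighted projection onto the convex set $\mathcal{K}$ satisfies the Pythagorean inequality). The remaining work — the three-point identity, the cancellation into $\|\cdot\|_{L_{t-1}}^2$, and the trace bookkeeping — is routine, provided one is consistent about whether $L_t$ accumulates $\nabla\psi_t\nabla\psi_t^\top$ or $\nabla\psi_t^\top\nabla\psi_t$ (the version matching the norm in Assumption \ref{asp: matrix exp concave} is the one that makes both the dimensions of $L_t^{-1}G_t$ and the telescoping work).
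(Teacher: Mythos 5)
Your proposal is correct and follows essentially the same route as the paper's proof: the curvature inequality from Assumption 4 at $(X,Y)=(\Theta^*,\Theta_t)$, non-expansiveness of the projection in the $L_t$-metric, expansion of $\|\Theta_t-\frac{1}{\alpha}L_t^{-1}G_t-\Theta^*\|_{L_t}^2$, and telescoping via $L_t-L_{t-1}=G_tG_t^\top$ to cancel the $\frac{\alpha}{2}\|\Theta_t-\Theta^*\|_{G_tG_t^\top}^2$ penalty (you collapse per step before summing, the paper sums first and then cancels — a cosmetic difference only). Your two flagged caveats are well taken and in fact match what the paper leaves implicit: $\Pi_{\mathcal{K}}$ must indeed be read as the generalized projection in the $L_t$-norm for the Pythagorean step, and the $\nabla\psi_t^\top\nabla\psi_t$ in \eqref{update1} is a transposition slip, since both Assumption 4 and Lemma \ref{lemma: elliptical potential} use $G_tG_t^\top$.
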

\begin{proof}
Let $\W^*\in\mathrm{argmin}_{\W\in\mathcal{K}}\sum_{t=1}^T\psi_t(\W)$ to be the minimizer in the hindsight. By Assumption 4, we have
$$\psi_t(\W_t)-\psi_t(\W^*)\leq \langle G_t,\W_t-\W^*\rangle-\frac{\alpha}{2}\|\W^*-\W_t\|_{G_tG_t^\top}^2.$$

As $\W_{t+1} = \Pi_\mathcal{K}(\W_t-\frac{1}{\alpha} L_t^{-1} G_t)$, we obtain
\begin{align*}
    \|\W_{t+1}-\W^*\|_{L_t}^2&\leq \|\W_t-\frac{1}{\alpha} L_t^{-1} G_t-\W^*\|_{L_t}^2\\
    &\leq \|\W_t-\W^*\|_{L_t}^2-\frac{2}{\alpha}\langle \W_t-\W^*,G_t\rangle+\frac{1}{\alpha^2}\|G_t\|_{L_t^{-1}}^2.
\end{align*}

Summing up over $t=1$ to $T$ yields
\begin{align*}
    &\sum_{t=1}^T \langle \W_t-\W^*,G_t\rangle \\ 
    =& \frac{1}{2\alpha}\sum_{t=1}^T \|G_t\|_{L_t^{-1}}^2 +\frac{\alpha}{2}\|\W_1-\W^*\|_{L_0}^2+\frac{\alpha}{2}\sum_{t=1}^T\|\W_t-\W^*\|_{L_{t}-L_{t-1}}^2-\frac{\alpha}{2}\|\W_{T+1}-\W^*\|_{L_T}^2 \\
    \leq 
    & \frac{1}{2\alpha}\sum_{t=1}^T \|G_t\|_{L_t^{-1}}^2 +\frac{\alpha}{2}\epsilon \mathcal{D}^2+\frac{\alpha}{2}\sum_{t=1}^T\|\W_t-\W^*\|_{G_tG_t^\top}^2.\\
\end{align*}
Thus we have
\begin{align*}
    \sum_{t=1}^T \psi_t(\W_t)-\psi_t(\W^*)
    \leq\sum_{t=1}^T \langle G_t,\W_t-\W^*\rangle-\frac{\alpha}{2}\|\W^*-\W_t\|_{G_tG_t^\top}^2
    \leq\frac{1}{2\alpha}\sum_{t=1}^T \|G_t\|_{L_t^{-1}}^2 +\frac{\alpha\epsilon \mathcal{D}^2}{2}.
\end{align*}
\end{proof}
We next present the matrix-form elliptical potential lemma. The proof is similar to the proof in \cite{hazan2007}. For completeness, we provide the proof here.

\begin{lemma}\label{lemma: elliptical potential}
Suppose $L_0=\epsilon I$, $L_{t+1}=L_t+G_tG_t^\top$ and $\|G_t\|_2\leq \mathcal{L}_G$ for all $t$, we have
\begin{align}\label{elliptical potential}
    \sum_{t=1}^T \|G_t\|_{L_t^{-1}}^2\leq \log \frac{|L_T|}{|L_0|}\leq n\log\left(\frac{T\mathcal{L}_G^2+\epsilon}{\epsilon}\right).
\end{align}
\end{lemma}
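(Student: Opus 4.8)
The plan is to establish the two inequalities in \eqref{elliptical potential} separately: a term-by-term ``potential'' estimate that telescopes into $\log(|L_T|/|L_0|)$, followed by a crude eigenvalue bound on the final log-determinant. Throughout I would use the recursion in the form $L_t = L_{t-1} + G_t^\top G_t$ from \eqref{update1}, so that $L_t$ already incorporates $G_t$; this is precisely what makes the telescoping come out with the correct sign. I also record that, in this convention, $\|G_t\|_{L_t^{-1}}^2 = \Tr\!\left(G_t L_t^{-1} G_t^\top\right)$.

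For the first inequality, the core is a per-step bound $\|G_t\|_{L_t^{-1}}^2 \le \log\frac{|L_t|}{|L_{t-1}|}$. First I would rewrite the determinant ratio with the matrix determinant lemma (Sylvester's identity $\det(I+AB)=\det(I+BA)$), giving $\frac{|L_t|}{|L_{t-1}|} = \det\!\left(I + G_t L_{t-1}^{-1} G_t^\top\right)$. Setting $N = G_t L_{t-1}^{-1} G_t^\top \succeq 0$, the Sherman--Morrison--Woodbury formula applied to $L_t^{-1} = (L_{t-1} + G_t^\top G_t)^{-1}$ yields the identity $G_t L_t^{-1} G_t^\top = N(I+N)^{-1}$, hence $\|G_t\|_{L_t^{-1}}^2 = \Tr\!\left(N(I+N)^{-1}\right)$. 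Diagonalizing $N$ and applying the scalar inequality $\frac{x}{1+x}\le \log(1+x)$ for $x\ge 0$ eigenvalue-by-eigenvalue gives $\Tr\!\left(N(I+N)^{-1}\right) = \sum_i \frac{\nu_i}{1+\nu_i} \le \sum_i \log(1+\nu_i) = \log\det(I+N)$, which is exactly the per-step claim. Summing over $t=1,\dots,T$ then telescopes to $\sum_{t=1}^T \|G_t\|_{L_t^{-1}}^2 \le \log\frac{|L_T|}{|L_0|}$.

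For the second inequality I would bound every eigenvalue of $L_T$. From $L_T = \epsilon I + \sum_{t=1}^T G_t^\top G_t$ and the spectral-norm bound $\|G_t\|_2 \le \mathcal{L}_G$ (Assumption \ref{asp: matrix exp concave}), each summand satisfies $G_t^\top G_t \preceq \mathcal{L}_G^2 I$, so $L_T \preceq (\epsilon + T\mathcal{L}_G^2) I$; consequently every eigenvalue of $L_T$ is at most $\epsilon + T\mathcal{L}_G^2$. Since $|L_T|$ is the product of its $n$ eigenvalues and $|L_0| = \epsilon^n$, I obtain $\log\frac{|L_T|}{|L_0|} \le n\log\frac{\epsilon + T\mathcal{L}_G^2}{\epsilon}$, as claimed.

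The main obstacle is the passage from the classical vector elliptical potential lemma to the matrix-valued $G_t$: the quantity $\|G_t\|_{L_t^{-1}}^2$ is a trace rather than a scalar quadratic form, so the per-step estimate must be routed through the Sylvester determinant identity and the Woodbury identity, and the final scalar inequality must be lifted to a trace inequality via simultaneous diagonalization (legitimate because $N(I+N)^{-1}$ and $\log(I+N)$ are functions of the same positive semidefinite matrix $N$ and hence commute). The remaining arithmetic — the determinant ratio and the eigenvalue bound — is routine.
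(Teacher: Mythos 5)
Your proof is correct, and it shares the paper's overall skeleton---a per-step estimate $\|G_t\|_{L_t^{-1}}^2 \le \log\frac{|L_t|}{|L_{t-1}|}$ that telescopes, followed by the spectral bound $L_T \preceq (T\mathcal{L}_G^2+\epsilon)I$, where this second half is identical to the paper's---but you reach the per-step estimate by a genuinely different route. The paper invokes as a black box the trace--log-determinant inequality from \cite{hazan2007}, which correctly stated reads $\langle A-B, A^{-1}\rangle \le \log\frac{|A|}{|B|}$ for $A\succeq B\succ 0$; the paper's rendering $\langle A-B,B\rangle\ge\log\frac{|A|}{|B|}$ and the intermediate expression $\langle G_tG_t^\top, L_t\rangle$ are typos (the weight must be $L_t^{-1}$, and the inequality as printed is false already for scalars $a=1$, $b=1/2$). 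You instead prove the step from scratch: Sylvester's identity gives $\frac{|L_t|}{|L_{t-1}|}=\det(I+N)$ with $N=G_tL_{t-1}^{-1}G_t^\top$, Woodbury yields the exact identity $G_tL_t^{-1}G_t^\top=N(I+N)^{-1}$ (which checks out: $N-N(I+N)^{-1}N=N(I+N)^{-1}$), and the scalar bound $\frac{x}{1+x}\le\log(1+x)$ applied eigenvalue-by-eigenvalue---legitimate, as you note, since both sides are functions of the same positive semidefinite $N$---closes it. At bottom the two arguments rest on the same scalar fact ($1-\mu\le-\log\mu$ with $\mu=(1+x)^{-1}$), so they are cousins rather than strangers; what your route buys is self-containedness and an exact identity in place of a cited inequality whose statement in the paper is garbled. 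You also resolve two ambiguities the paper glosses over: (i) the indexing---$L_t$ must already incorporate $G_t$, as in \eqref{update1}, since otherwise the per-step quantity would be $\Tr(N)$ rather than $\Tr\bigl(N(I+N)^{-1}\bigr)$, and $\Tr(N)\le\log\det(I+N)$ is false; and (ii) the transpose convention---the lemma's $G_tG_t^\top$ would make $L_t$ an $m\times m$ matrix, clashing with the factor $n$ in the final bound, whereas your $n\times n$ reading $L_t=L_{t-1}+G_t^\top G_t$ with $\|G_t\|_{L_t^{-1}}^2=\Tr\bigl(G_tL_t^{-1}G_t^\top\bigr)$ is the internally consistent one. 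One cosmetic remark: Assumption \ref{asp: matrix exp concave} bounds $\|\nabla\psi_t\|_F$ rather than $\|G_t\|_2$, but this is harmless since $\|G_t\|_2\le\|G_t\|_F$ and the lemma hypothesizes the spectral bound directly.
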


\begin{proof}
By the matrix inequality $\langle A-B,B\rangle\geq\log\frac{|A|}{|B|}$ for $A\succeq B\succeq0$, we have
\begin{align*}
    \sum_{t=1}^T \|G_t\|_{L_t^{-1}}^2 &= \sum_{t=1}^T \langle G_tG_t^\top, L_t\rangle\leq \sum_{t=1}^T \langle L_t-L_{t-1}, L_t\rangle\leq \sum_{t=1}^T \log\frac{|L_t|}{|L_{t-1}|}= \log \frac{|L_T|}{|L_0|}.
\end{align*}

Since $L_T=\sum_{t=1}^T G_tG_t^\top+\epsilon I$ and $\|G_t\|_2\leq \mathcal{L}_G$, the largest eigenvalue of $L_T$ is at most $T\mathcal{L}_G^2+\epsilon$. This finishes the proof.
\end{proof}
Finally, we can prove the logarithmic regret of our proposed method by combining all the lemmas together and setting an appropriate regularization coefficient.

\begin{proof}
By Lemma \ref{lemma: regret bound} and Lemma \ref{lemma: elliptical potential}, we have
\begin{align*}
    \mathcal{R}_T&\leq\frac{1}{2\alpha}\sum_{t=1}^T \|G_t\|_{L_t^{-1}}^2 +\frac{\alpha\epsilon\mathcal{D}^2}{2}\leq \frac{1}{2\alpha}n\log(\frac{T\mathcal{L}_G^2+\epsilon}{\epsilon})+\frac{\alpha\epsilon \mathcal{D}^2}{2}
    \leq \frac{n}{\alpha}\log \alpha \mathcal{L}_G^2\mathcal{D}^2T,
\end{align*}
which completes our proof.
\end{proof}


\end{document}